\newtheorem{theorem}{Theorem}
\numberwithin{theorem}{section}
\newtheorem{corollary}[theorem]{Corollary}
\newtheorem{lemma}[theorem]{Lemma}
\newtheorem{proposition}[theorem]{Proposition}
\theoremstyle{definition}
\newtheorem{definition}[theorem]{Definition}
\title{Provable better quasi orders}
\author[A.~Freund]{Anton Freund}
\author[A.~Marcone]{Alberto Marcone}
\author[F.~Pakhomov]{Fedor Pakhomov}
\author[G.~Sold\`a]{Giovanni Sold\`a}
\address{Anton Freund, University of W\"urzburg, Institute of Mathematics, Emil-Fischer-Stra{\ss}e~40, 97074 W\"urzburg, Germany}
\email{anton.freund@uni-wuerzburg.de}
\address{Alberto Marcone, Dipartimento di Scienze Matematiche, Informatiche e Fisiche, Universit\`a di Udine, Via delle Scienze, 208 -- Loc.\ Rizzi, 33100 Udine, Italy }
\email{alberto.marcone@uniud.it}
\address{Fedor Pakhomov and Giovanni Sold\`a, Department of Mathematics: Analysis, Logic and Discrete Mathematics, Ghent University, Krijgslaan 281 S8, 9000 Ghent, Belgium}
\email{fedor.pakhomov@ugent.be{\normalfont, }giovanni.a.solda@gmail.com}
\thanks{The work of Anton Freund has been funded by the Deutsche Forschungsgemeinschaft (DFG, German Research Foundation) -- Project number 460597863. Alberto Marcone's research was partially supported by the Italian PRIN 2017 Grant \lq\lq Mathematical Logic: models, sets, computability\rq\rq. The work of Fedor Pakhomov and Giovanni Sold\`a has been funded by the FWO grant G0F8421N}
\begin{document}

\begin{abstract}
It has recently been shown that fairly strong axiom systems such as $\mathsf{ACA}_0$ cannot prove that the antichain with three elements is a better quasi order~($\mathsf{bqo}$). In the present paper, we give a complete characterization of the finite partial orders that are provably~$\mathsf{bqo}$ in such axiom systems. The result will also be extended to infinite orders. As an application, we derive that a version of the minimal bad array lemma is weak over~$\mathsf{ACA_0}$. In sharp contrast, a recent result shows that the same version is equivalent to \mbox{$\Pi^1_2$-}comprehension over the stronger base theory~$\mathsf{ATR}_0$.
\end{abstract}

\keywords{Better quasi orders, reverse mathematics, minimal bad array lemma}
\subjclass[2020]{06A06, 03B30, 03F35}

\maketitle

\section{Introduction}

Let us first recall the notion of better quasi order. We write $[X]^{<\omega}$ and $[X]^\omega$ for the collection of finite and countably infinite subsets of a set~$X$, respectively. When we have $X\subseteq\mathbb N$, we identify elements of $[X]^{<\omega}\cup[X]^\omega$ with the strictly increasing sequences that enumerate them. For such sequences, we write $s\sqsubset t$ to express that $s$ is a proper initial segment of~$t$ (where $t$ may be infinite but $s$ is necessarily finite). Even when we consider them as sequences, we use notation such as $\subseteq$ for the usual notions on sets. By $\subset$ we denote the proper subset relation.

A block is a set $B\subseteq[\mathbb N]^{<\omega}$ such that the so-called base $\bigcup B\subseteq\mathbb N$ is infinite and each $X\in[\bigcup B]^\omega$ admits a unique $s\sqsubset X$ with $s\in B$. It follows that we cannot have $s,t\in B$ with $s\sqsubset t$. A barrier is a block $B$ satisfying the stronger condition that $s\subset t$ holds for no~$s,t\in B$. By a $Q$-array for a quasi order~$Q$, we mean a function $f:B\to Q$ on some barrier~$B$.

For $X\subseteq\mathbb N$ with smallest element~$x$, we put $X^-:=X\backslash\{x\}$. Given $s,t\in[\mathbb N]^{<\omega}$, we write $s\vartriangleleft t$ if there is an $X\in[\mathbb N]^\omega$ with $s\sqsubset X$ and $t\sqsubset X^-$. The latter is decidable since it only depends on~$s\cup t\sqsubset X$. An array $f:B\to Q$ is good if there are $s,t\in B$ with $s\vartriangleleft t$ such that $f(s)\leq f(t)$ holds in the order~$Q$. Otherwise $f$ is bad. A better quasi order ($\mathsf{bqo}$) is a quasi order~$Q$ such that any $Q$-array is good.

Let $[\mathbb N]^1$ be the barrier that contains all singletons. In view of $([\mathbb N]^1,\vartriangleleft)\cong(\mathbb N,<)$, any better quasi order is a well quasi order ($\mathsf{wqo}$). The notion of $\mathsf{bqo}$ has been introduced by C.~Nash-Williams in order to secure stronger closure properties~\mbox{\cite{nash-williams-trees,nash-williams-bqo}}. It plays a crucial role in R.~Laver's famous proof~\cite{laver71} of Fra\"iss\'e's conjecture that the countable linear orders are $\mathsf{wqo}$ under embeddability (which actually shows the stronger result that the $\sigma$-scattered orders are~$\mathsf{bqo}$).

Better quasi orders have been analyzed from the viewpoint of mathematical logic, in particular within the framework of reverse mathematics (see, e.\,g.,~\cite{friedman-rm,simpson09,hirschfeldt-slicing-the-truth,dzhafarov-mummert} for background on the research project and \cite{marcone-survey-new} for a recent survey on $\mathsf{wqo}$s and~$\mathsf{bqo}$s in reverse mathematics). Marcone has shown, in particular, that the notion of~$\mathsf{bqo}$ is $\Pi^1_2$-complete~\cite{marcone-Pi12-complete, marcone-bqoPi12-complete}. We mention the technical but important fact~\cite{cholak-RM-wpo} that the base~$\bigcup B$ of a block~$B$ can be formed in~$\mathsf{RCA}_0$. One obtains an equivalent definition of~$\mathsf{bqo}$ when barriers are replaced by blocks, where the base theory~$\mathsf{WKL}_0$ suffices to secure the equivalence~\cite{cholak-RM-wpo}. Maps $B\to Q$ that are defined on blocks correspond to functions $[\bigcup B]^\omega\to Q$ that are continuous in a suitable sense (see Section~\ref{sect:mba}). As shown by S.~Simpson~\cite{simpson-borel-bqos}, another equivalent definition of~$\mathsf{bqo}$ arises when continuous is weakened to Borel. An intermediate choice yields the notion of $\Delta^0_2\text{-}\mathsf{bqo}$ that appears below. In these cases, the equivalence appears to require a considerably stronger base theory. For our results in reverse mathematics, we work with the standard definition in terms of barriers that was given above. Most considerations will also apply to blocks.

It has been shown by A.~Montalb\'an~\cite{montalban-fraisse} that Fra\"iss\'e's conjecture can be proved in the theory~$\Pi^1_1\text{-}\mathsf{CA}_0$ from reverse mathematics. To establish this result, Montalb\'an gave a new proof of Fra\"iss\'e's conjecture, as Laver's argument relies on the minimal bad array principle, which had been conjectured to be unprovable in~$\Pi^1_1\text{-}\mathsf{CA}_0$. This last conjecture was recently confirmed by A.~Freund, F.~Pakhomov and G.~Sold\`a~\cite{bad-array-Pi^1_2-CA}, who proved that the minimal bad array principle is equivalent to the even stronger set existence principle of $\Pi^1_2$-comprehension, over the base theory~$\mathsf{ATR}_0$.

For $n\in\mathbb N$, we write $n$ for the usual linear order and $\overline n$ for the antichain on the underlying set~$\{0,\ldots,n-1\}$. An important ingredient for the aforementioned result by Montalb\'an is the statement that $\overline 3$ is $\Delta^0_2\text{-}\mathsf{bqo}$. The latter entails arithmetic transfinite recursion over~$\mathsf{RCA}_0$, as recently shown by Freund~\cite{freund-3-bqo}, who also showed that arithmetic recursion along~$\mathbb N$ (the central axiom of~$\mathsf{ACA}_0^+$) follows from the statement that $\overline 3$ is $\mathsf{bqo}$.

If an extension of $\mathsf{RCA}_0$ proves that $\overline 3$ is $\mathsf{bqo}$, it proves that the same holds for all finite quasi orders, by a result of Marcone \cite{marcone-survey-old} and its strengthening by Freund~\cite{freund-3-bqo}. This suggests the following question:
\begin{quote}
Which finite partial orders are provably $\mathsf{bqo}$ in theories like~$\mathsf{ACA}_0$, which do not prove that $\overline 3$ is $\mathsf{bqo}$?
\end{quote}
In this paper we answer this question and also give a characterization of the infinite partial orders that are provably~$\mathsf{bqo}$ in these theories. This connects with the classical topic of provable well orders, which is studied in ordinal analysis (see, e.\,g., \cite{rathjen-realm} for background). Let us note that the focus on partial orders is pure convenience, as any quasi order is equivalent to an anti-symmetric quotient.

To state our results, we introduce some notation. Given a partial order $(I,\preceq)$ and quasi orders $(Q_i,\leq_i)$ for $i\in I$, we consider the quasi order given by
\begin{gather*}
    \textstyle\sum_{i\in I}Q_i=\{(i,q)\,|\,i\in I\text{ and }q\in Q_i\},\\
    (i,q)\leq(j,r)\text{ in }\textstyle\sum_{i\in I}Q_i\quad\Leftrightarrow\quad i\prec j\text{ or }(i=j\text{ and }q\leq_i r).
\end{gather*}
Let us note that the binary sum with incomparable summands can be recovered as
\begin{equation*}
    Q_0\oplus Q_1=\textstyle\sum_{i\in\overline 2}Q_i.
\end{equation*}
By a linear sum, we mean an order $\sum_{i\in I}Q_i$ such that~$I$ is linear. When the latter is also well-founded, we speak of a well-ordered sum. We say that a quasi order is a linear or well-ordered sum of orders with a certain property if it is isomorphic to a linear or well-ordered sum $\sum_{i\in I}Q_i$ such that each $Q_i$ has the property in question. Let us also recall that a function $f:P\to Q$ between quasi orders is said to be order reflecting if $f(p)\leq_Q f(q)$ implies $p\leq_P q$ for all~$p,q\in P$. When all these implications are equivalences, $f$ is called an embedding. If $Q$ is $\mathsf{bqo}$ and there is an order reflecting map $f:P\to Q$, then $P$ is $\mathsf{bqo}$, since any bad array $g:B\to P$ would induce a bad array~$f\circ g:B\to Q$.

Marcone~\cite{marcone-survey-old} has shown that $\mathsf{RCA}_0$ proves $\overline 2$ to be~$\mathsf{bqo}$. For our characterization of provable $\mathsf{bqo}$s, we will combine this fact with the new result that $\mathsf{RCA}_0$ proves each of the following:
\begin{itemize}
    \item the order $1\oplus 2$ is $\mathsf{bqo}$ precisely if the same holds for~$\overline 3$ (Corollary~\ref{cor:one-two-three}),
    \item a partial order $P$ is a linear sum of antichains of size at most two precisely if there is no order reflecting map of $1\oplus 2$ into~$P$ (Corollary~\ref{cor:quasi-emb-1oplus2}),
    \item the class of $\mathsf{bqo}$s is closed under well-ordered sums (Proposition~\ref{prop:wo-sum-bqo}).
\end{itemize}
Concerning the third point, we note that the $\mathsf{bqo}$s are actually closed under sums where the index set is $\mathsf{bqo}$ (not necessarily linear). In view of $\sum_{p\in\overline 2}\overline 2\cong\overline 4$, however, this generalization requires a stronger base theory.

For extensions of $\mathsf{RCA}_0$ that do not prove~$\overline 3$ to be $\mathsf{bqo}$, the finite partial orders that are provably~$\mathsf{bqo}$ can thus be characterized as the linear sums of antichains $\overline 1$ and~$\overline 2$ (Theorem~\ref{thm:prov-bqo-fin}). By combining the above with classical results on provable well orders, we can also characterize the infinite partial orders that are provably~$\mathsf{bqo}$ in such axiom systems (Theorem~\ref{thm:prov-bqo-inf}).

As an application of our work on provable~$\mathsf{bqo}$s, we provide formal evidence that the aforementioned result on the minimal bad array principle~\cite{bad-array-Pi^1_2-CA} requires a reasonably strong base theory. Specifically, we show that a certain version of that principle does not entail arithmetic transfinite recursion over~$\mathsf{ACA}_0$ (Corollary~\ref{cor:MBA-ATR}). As an interesting counterpoint, we also show that another version of the principle entails arithmetic comprehension over~$\mathsf{RCA}_0$ (Proposition~\ref{prop:mba-to-aca}).

\section{One plus two is three}

Let us recall that we write $n$ for the linear order and $\overline n$ for the antichain with underlying set~$\{0,\ldots,n-1\}$. We use $\oplus$ for the disjoint union of orders in which the summands are incomparable. In the present section, we show that $1\oplus 2$ is a better quasi order precisely when the same holds for~$\overline 3$, provably in~$\mathsf{RCA}_0$. By a~previous result~\cite{freund-3-bqo}, it follows that $1\oplus 2$ being $\mathsf{bqo}$ entails at least $\mathsf{ACA}_0^+$ (arithmetic recursion along~$\mathbb N$). In the next section, we will see that $1\oplus 2$ and $\overline 3$ act as forbidden suborders for the partial orders that are provably $\mathsf{bqo}$ in moderately weak theories.

A quasi order~$Q$ is~$\mathsf{bqo}$ precisely when a suitable order on the hereditarily countable sets with urelements from~$Q$ is well-founded or equivalently~$\mathsf{bqo}$. To avoid confusion, we stress that well-foundedness is not equivalent to being $\mathsf{bqo}$ in general but only in the indicated case. The characterization of $\mathsf{bqo}$s in terms of sets with urelements can be traced back to the original work of Nash-Williams~\cite{nash-williams-trees} (see~\cite{pequignot-survey} for a detailed proof). As in~\cite{freund-3-bqo}, we focus on the hereditarily finite case.

\begin{definition}[$\mathsf{RCA}_0$]\label{def:H(Q)}
Let us consider a quasi order~$Q$. We recursively generate a set $H_f(Q)$ by the following clauses:
\begin{enumerate}[label=(\roman*)]
\item for each $q\in Q$ we include an element $(0,q)\in H_f(Q)$,
\item we add an element $(1,a)\in H_f(Q)$ for each finite set $a\subseteq H_f(Q)$ with elements that we have already constructed.
\end{enumerate}
Furthermore, we define a quasi order $\leq_{H(Q)}$ on the set $H_f(Q)$ by stipulating that the following clauses are satisfied:
\begingroup
\allowdisplaybreaks
\begin{align*}
(0,p)\leq_{H(Q)}(0,q)\quad&\Leftrightarrow\quad p\leq q\text{ holds in }Q,\\
(0,p)\leq_{H(Q)}(1,b)\quad&\Leftrightarrow\quad \text{there is some $y\in b$ with }(0,p)\leq_{H(Q)}y,\\
(1,a)\leq_{H(Q)}(0,q)\quad&\Leftrightarrow\quad \text{all $x\in a$ validate }x\leq_{H(Q)}(0,q),\\
(1,a)\leq_{H(Q)}(1,b)\quad&\Leftrightarrow\quad \text{each $x\in a$ admits a $y\in b$ with $x\leq_{H(Q)}y$}.
\end{align*}
\endgroup
To improve readability, we shall from now on write $q\in Q$ and $a\in H_f(Q)\backslash Q$ in order to refer to the elements $(0,q)$ and $(1,a)$ of the set $H_f(Q)$.
\end{definition}

In $\mathsf{RCA}_0$, the elements of $H_f(Q)$ are not to be represented by sets in the sense of second-order objects but rather by numerical codes for finite trees or terms with leaf labels or constant symbols from~$Q$. Correspondingly, we consider $\leq_{H(Q)}$ as a primitive recursive relation between these codes. An induction over trees or terms confirms the implicit claim that $H_f(Q)$ is a quasi order. The following result is Theorem~3.2 of the indicated reference.

\begin{proposition}[$\mathsf{RCA_0}$; \cite{freund-3-bqo}]\label{prop:H(Q)-bqo}
If $Q$ is $\mathsf{bqo}$, then so is $H_f(Q)$.
\end{proposition}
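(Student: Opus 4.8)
The plan is to argue directly, bypassing Nash-Williams's minimal bad array lemma. The classical proof of a statement of this shape takes a minimal bad array into $H_f(Q)$ and derives a contradiction by ``opening up'' its values one tree-level at a time; but since the unprovability of the minimal bad array lemma in weak theories is exactly what is at stake, I would instead carry out the opening-up by an explicit recursion inside $\mathsf{RCA}_0$, exploiting that the elements of $H_f(Q)$ are finite trees whose rank (height) is a natural number.

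Concretely, assume towards a contradiction that $f\colon B\to H_f(Q)$ is bad; the goal is to manufacture a bad $Q$-array, contradicting the hypothesis on $Q$. First I would thin $B$ to a sub-barrier on which the $2$-colouring $s\mapsto[f(s)\in Q]$ is constant, a combinatorial surgery on barriers of the kind made available by~\cite{cholak-RM-wpo} and its relatives. If the constant value is ``$f(s)\in Q$'', then $f$ is literally a bad $Q$-array and we are done; so assume every value is of the form $(1,a_s)$. Badness then reads: for all $s\vartriangleleft t$ there is some $x\in a_s$ with $x\not\le_{H(Q)}y$ for every $y\in a_t$. The key step is a \emph{selection}: pass to a further sub-barrier and choose for each $s$ a single component $x_s\in a_s$ so that $s\mapsto x_s$ is again bad. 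I would obtain this by a fusion over the barrier, treating for each fixed $s$ the assignment $t\mapsto(\text{a working witness in }a_s)$ as a colouring by the \emph{finite} set $a_s$ and thinning so that it stabilises, simultaneously for all~$s$.

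Iterating this selection produces barriers $B=B_0\supseteq B_1\supseteq\cdots$ and bad arrays $f_n\colon B_n\to H_f(Q)$ such that, along any $X$ meeting every $B_n$, the values $f_n(s_n)$ at the initial segments $s_n\sqsubset X$ with $s_n\in B_n$ strictly descend in the well-founded subtree relation — unless at some stage a $Q$-valued array appears on a sub-barrier, which again yields a bad $Q$-array and terminates the argument. Since a finite tree admits no infinite descending chain of proper subtrees, the process must halt, and in every case we have extracted a bad $Q$-array, the desired contradiction.

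I expect the main obstacle to lie entirely in running this $\omega$-length construction inside $\mathsf{RCA}_0$, in several linked respects where the textbook argument would hide behind the minimal bad array lemma. The thinning of barriers to make finite colourings constant must be the genuinely $\mathsf{RCA}_0$-provable version; the simultaneous selection of the $x_s$ must be organised as a fully explicit fusion, so that no instance of choice or of comprehension beyond $\mathsf{RCA}_0$ slips in; and the nested sub-barriers $B_n$ must be built frugally enough that $\bigcap_n\bigcup B_n$ stays infinite, so that there really is an $X$ meeting every $B_n$ along which the rank descent can be read off. Getting these bookkeeping tasks to cohere is, I think, the real content of the result.
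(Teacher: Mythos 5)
Your overall instinct---bypass the minimal bad array lemma and exploit that the values of $f$ are finite trees, unfolding them one level at a time---is the right one and matches the spirit of the proof in \cite{freund-3-bqo}. But the implementation is not available in $\mathsf{RCA}_0$. Every one of your thinning steps is an instance of the Nash--Williams barrier partition theorem (homogenise a finite colouring of a barrier on a sub-barrier). What \cite{cholak-RM-wpo} supplies in $\mathsf{WKL}_0$ is only the refinement of a block to a barrier; the partition theorem itself is essentially the clopen Ramsey theorem, hence equivalent to $\mathsf{ATR}_0$, and already for the barrier of all three-element sets it amounts to Ramsey's theorem for triples and thus to $\mathsf{ACA}_0$. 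Since a bad $H_f(Q)$-array can live on a barrier of arbitrary countable rank, neither your initial two-colouring step nor the per-$s$ stabilisation of witnesses can be carried out---and the present paper explicitly depends on this proposition being provable \emph{without} clopen Ramsey (compare the remark in the proof of Proposition~\ref{prop:mba-to-aca}). A second, independent gap is termination: the heights of the trees $f(s)$ need not be bounded over~$B$, so your $\omega$-length chain $B_0\supseteq B_1\supseteq\cdots$ need not stabilise at any finite stage; the descent terminates only branch by branch, and assembling the branchwise stopping times into a single bad $Q$-array is exactly the part your sketch leaves to ``bookkeeping''.

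The cure for both problems is to let the extracted witness depend on the data it genuinely depends on, instead of forcing uniformity by Ramsey-type fusion. From $f(s)\not\leq_{H(Q)}f(t)$ with $f(s)\notin Q$ one reads off a least $x\in f(s)$ such that $x\not\leq_{H(Q)}y$ for every $y\in f(t)$ (respectively $x\not\leq_{H(Q)}f(t)$ when $f(t)\in Q$); this $x$ depends on the pair, so the new array is defined on the block of unions $s\cup t$ with $s\vartriangleleft t$. Its badness follows at once from Lemma~\ref{lem:H-order-basic} and transitivity, because the value at the shifted position is an element of $f(t)$ and hence $\leq_{H(Q)}f(t)$. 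Iterating, the $n$-th array lives on unions $s_0\cup\cdots\cup s_n$ of $\vartriangleleft$-chains and takes values that are depth-$n$ subtrees of $f(s_0)$ (or elements of $Q$, after which the value is frozen); since $f(s_0)$ is a finite tree, the recursion reaches $Q$ after a number of steps computable from $s_0,\ldots,s_n$. Declaring each union to stop at the least such $n$ yields a single block carrying a bad $Q$-array, all by explicit primitive recursion, with no homogenisation and no intersection of infinitely many sub-barriers. This is, in essence, the argument of Theorem~3.2 of \cite{freund-3-bqo}.
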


For $x\in H_f(Q)$ considered as a tree, we write $\operatorname{supp}(x)$ for the set of leaf labels of~$x$, which means that we have
\begin{align*}
    \operatorname{supp}(q)&=\{q\}&&\text{for $q\in Q$},\\
    \operatorname{supp}(a)&=\textstyle\bigcup\{\operatorname{supp}(x)\,|\,x\in a\}&&\text{for $a\in H_f(Q)\backslash Q$},
\end{align*}
so that $\operatorname{supp}(x)$ is a finite subset of~$Q$ and hence an element of~$H_f(Q)$. As in~\cite{freund-3-bqo}, a straightforward induction over trees or terms yields the following.

\begin{lemma}[$\mathsf{RCA_0}$; \cite{freund-3-bqo}]\label{lem:H-order-basic}
For any quasi order~$Q$ we have
\begin{align*}
    x\leq_{H(Q)}y\quad&\Rightarrow\quad\operatorname{supp}(x)\leq_{H(Q)}\operatorname{supp}(y) && \text{for any $x,y\in H_f(Q)$},\\
    x\in a\quad&\Rightarrow\quad x\leq_{H(Q)}a && \text{when $a\in H_f(Q)\backslash Q$}.
\end{align*}
\end{lemma}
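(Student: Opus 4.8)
\emph{Proof idea.} Both implications are established by induction on the structure of the terms (equivalently, finite trees) involved, and I would treat the second implication first, since it is self-contained. Fixing $a=(1,a)\in H_f(Q)\setminus Q$, one shows by induction on the term~$a$ that every $x\in a$ satisfies $x\leq_{H(Q)}a$. If $x$ is an urelement $(0,q)$, then $x$ itself witnesses the relevant clause of Definition~\ref{def:H(Q)}, using only reflexivity of~$Q$. If $x=(1,b)$, then $x$ is a proper subterm of~$a$, so the induction hypothesis applies to it; to check that each $z\in b$ has a witness in~$a$ for the clause defining $(1,b)\leq_{H(Q)}(1,a)$, we choose the witness $x\in a$ and note that $z\leq_{H(Q)}x$ holds by the induction hypothesis applied to~$x$.

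For the first implication I would first record an elementary observation about finite subsets $S,T\subseteq Q$, viewed as elements of $H_f(Q)\setminus Q$: we have $S\leq_{H(Q)}T$ exactly when for every $p\in S$ there is $q\in T$ with $p\leq q$ in~$Q$. In particular this relation can only be helped by enlarging~$T$, and $\bigcup_i S_i\leq_{H(Q)}T$ iff $S_i\leq_{H(Q)}T$ for each~$i$. With this in hand, one proves $x\leq_{H(Q)}y\Rightarrow\operatorname{supp}(x)\leq_{H(Q)}\operatorname{supp}(y)$ by induction on the height of~$x$ plus the height of~$y$, going through the four clauses of Definition~\ref{def:H(Q)}. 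The urelement/urelement case is immediate. When $x=(0,p)$ and $y=(1,b)$ with some $y'\in b$ above~$x$, the induction hypothesis for the pair $(x,y')$ gives $\operatorname{supp}(x)\leq_{H(Q)}\operatorname{supp}(y')$, and $\operatorname{supp}(y')\subseteq\operatorname{supp}(y)$ together with monotonicity closes the case. When $x=(1,a)$ and $y=(0,q)$ with every $x'\in a$ below~$y$, the induction hypothesis for the pairs $(x',y)$ gives $\operatorname{supp}(x')\leq_{H(Q)}\operatorname{supp}(y)$ for all $x'\in a$, so the union observation yields $\operatorname{supp}(x)=\bigcup_{x'\in a}\operatorname{supp}(x')\leq_{H(Q)}\operatorname{supp}(y)$. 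The remaining case $x=(1,a)$, $y=(1,b)$ combines both moves: for each $x'\in a$ pick $y'\in b$ with $x'\leq_{H(Q)}y'$, apply the induction hypothesis to $(x',y')$, pass from $\operatorname{supp}(y')$ to $\operatorname{supp}(y)$ by monotonicity, and conclude via the union observation.

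All of this is primitive recursive in a code for~$Q$, and the two inductions run over a natural-number parameter (a term height, respectively the sum of two term heights) with arithmetical induction formulas, so the argument is available in~$\mathsf{RCA}_0$. I do not anticipate a genuine obstacle here: the only point needing a moment's care is checking, in the first implication, that the pair of subterms passed to the induction hypothesis always has strictly smaller height-sum, which holds in each case because one coordinate strictly decreases while the other does not increase.
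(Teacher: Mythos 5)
Your proof is correct and matches the paper's intent exactly: the paper does not write out an argument but only remarks that ``a straightforward induction over trees or terms yields the following'' (deferring to the cited reference), and your structural induction on the terms -- with the auxiliary observation about when one finite subset of $Q$ lies below another in $H_f(Q)$ and the height-sum bookkeeping for the first implication -- is precisely that straightforward induction, carried out correctly and with the right amount of care about its availability in $\mathsf{RCA}_0$.
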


The order $H_f(\overline 3)$ contains two independent copies of the natural numbers, as shown in~\cite{freund-3-bqo}. We now show that two `interlocked' copies can be found in~$H_f(1\oplus 2)$. Let us agree to write $1\oplus 2=\{\star\}\cup\{0,1\}$, where $0<1$ is the only strict inequality.

\begin{definition}[$\mathsf{RCA}_0$]
For $n\in\mathbb N$ we define $\dot n,\ddot n\in H_f(1\oplus 2)$ recursively by
\begin{equation*}
   \dot n=\{\star,0\}\cup\{\dot m\,|\,m<n\},\qquad\ddot n=\{\star,1\}\cup\{\ddot m\,|\,m<n\}.
\end{equation*}
\end{definition}

The following is a variation on a result about~$H_f(\overline 3)$ that was proved in~\cite{freund-3-bqo}. The difference is that $\dot m$ and $\ddot n$ are always incomparable in~$H_f(\overline 3)$, while we get $\dot n\leq\ddot n$ since $0<1$ holds in~$1\oplus 2$.

\begin{proposition}[$\mathsf{RCA}_0$]\label{prop:interlocked-copies}
    For any $m,n\in\mathbb N$ we have
    \begin{equation*}
        \dot m\leq_{H(1\oplus 2)}\dot n\quad\Leftrightarrow\quad
        \ddot m\leq_{H(1\oplus 2)}\ddot n\quad\Leftrightarrow\quad
        \dot m\leq_{H(1\oplus 2)}\ddot n\quad\Leftrightarrow\quad m\leq n.
    \end{equation*}
    Furthermore, we have $\ddot m\not\leq_{H(1\oplus 2)}\dot n$ for any~$m,n\in\mathbb N$.
\end{proposition}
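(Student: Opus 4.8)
The plan is to derive all four assertions by induction on the numerical parameters, unwinding Definition~\ref{def:H(Q)} and using Lemma~\ref{lem:H-order-basic} to pass from elements of $H_f(1\oplus 2)$ to their supports. As a preliminary observation, a straightforward induction on~$n$ shows $\operatorname{supp}(\dot n)=\{\star,0\}$ and $\operatorname{supp}(\ddot n)=\{\star,1\}$. I also record that $m\le n$ gives the inclusions $\dot m\subseteq\dot n$ and $\ddot m\subseteq\ddot n$; together with reflexivity of $\leq_{H(1\oplus 2)}$ and the last clause of Definition~\ref{def:H(Q)}, this already yields $\dot m\leq_{H(1\oplus 2)}\dot n$ and $\ddot m\leq_{H(1\oplus 2)}\ddot n$ whenever $m\le n$.

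For the remaining ``$\Leftarrow$'' directions the one new ingredient is the diagonal inequality $\dot n\leq_{H(1\oplus 2)}\ddot n$, which I prove by induction on~$n$: in the defining clause for $\leq_{H(1\oplus 2)}$ one matches $\star\in\dot n$ with $\star\in\ddot n$, one matches $0\in\dot n$ with $1\in\ddot n$ using $0<1$ in $1\oplus 2$, and one matches each $\dot m\in\dot n$, for $m<n$, with $\ddot m\in\ddot n$ by the induction hypothesis. Transitivity together with the preliminary monotonicity then gives $\dot m\leq_{H(1\oplus 2)}\ddot m\leq_{H(1\oplus 2)}\ddot n$ for $m\le n$, which is the third ``$\Leftarrow$'' direction.

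The converse implications carry the real content, and I expect them to be the main obstacle. Consider, for instance, $\dot m\leq_{H(1\oplus 2)}\dot n\Rightarrow m\le n$, argued by induction on~$m$ with~$n$ universally quantified; the case $m=0$ is vacuous. For $m\ge 1$ we have $\dot{m-1}\in\dot m$, so there is some $y\in\dot n$ with $\dot{m-1}\leq_{H(1\oplus 2)}y$, and Lemma~\ref{lem:H-order-basic} with the support computation gives $\{\star,0\}=\operatorname{supp}(\dot{m-1})\leq_{H(1\oplus 2)}\operatorname{supp}(y)$. No atomic element ($\star$, $0$, or $1$) can play the role of $y$ here: the key structural feature of $1\oplus 2$ is that $\star$ bounds nothing but itself, so $\{\star,0\}$ is not $\leq_{H(1\oplus 2)}$-below any of $\{\star\}$, $\{0\}$, $\{1\}$. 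Hence $y=\dot k$ for some $k<n$ with $\dot{m-1}\leq_{H(1\oplus 2)}\dot k$, and the induction hypothesis yields $m-1\le k<n$, i.e.\ $m\le n$. The implications $\ddot m\leq_{H(1\oplus 2)}\ddot n\Rightarrow m\le n$ and $\dot m\leq_{H(1\oplus 2)}\ddot n\Rightarrow m\le n$ are handled identically, now tracking $\ddot{m-1}$ (respectively $\dot{m-1}$) and excluding the atomic witnesses in the same way. Finally, for the ``Furthermore'' clause, $\ddot m\leq_{H(1\oplus 2)}\dot n$ would give $\{\star,1\}\leq_{H(1\oplus 2)}\{\star,0\}$ by Lemma~\ref{lem:H-order-basic} and the support computation, which is impossible since the summand~$1$ on the left has no witness among $\{\star,0\}$ (neither $1\le\star$ nor $1\le 0$ holds in $1\oplus 2$). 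All the inductions used are over~$\mathbb N$ for low-complexity arithmetical statements, so the argument goes through in~$\mathsf{RCA}_0$.
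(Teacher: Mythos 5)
Your proof is correct and follows essentially the same strategy as the paper's: establish the forward directions by matching elements (induction or subset inclusion), and for the converses and the final claim use the support computation together with Lemma~\ref{lem:H-order-basic} to rule out the atomic witnesses $\star,0,1$, reducing to the induction hypothesis; the inductions are of low arithmetical complexity as you note. The only differences are cosmetic (you track $\dot{(m-1)}\in\dot m$ directly where the paper argues by contradiction via $\dot n\in\dot m$, and you spell out all the equivalences where the paper details only the last one and cites \cite{freund-3-bqo} for the rest).
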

\begin{proof}
    To improve readability, we write $\leq_H$ for the order relation on~$H_f(1\oplus 2)$. To establish the final claim of the proposition, we first note that we have
    \begin{equation*}
        \operatorname{supp}(\ddot m)=\{\star,1\}\quad\text{and}\quad\operatorname{supp}(\dot n)=\{\star,0\}.
    \end{equation*}
    Given that both $1\not\leq\star$ and $1\not\leq 0$ holds in $1\oplus 2$, we get $\operatorname{supp}(\ddot m)\not\leq_H\operatorname{supp}(\dot n)$. To conclude $\ddot m\not\leq_H\dot n$, we now invoke Lemma~\ref{lem:H-order-basic}. 
    
    In the following, we prove the last of the given equivalences. The proof of the other equivalences is similar and can be found in~\cite{freund-3-bqo}.
    
    To show that $m\leq n$ entails $\dot m\leq_H\ddot n$, we use induction on~$n$. Given $m\leq n$, the task is to show that each $x\in\dot m$ admits a $y\in\ddot n$ with $x\leq_H y$. For $x=\star$ and $x=0$ we can take $y=\star$ and $y=1$, respectively. In the remaining case we have $x=\dot k$ for some~$k<n$. The induction hypothesis ensures that $y=\ddot k$ is as required.
    
    We now use induction on~$m$ to show that $\dot m\leq_H\ddot n$ entails $m\leq n$ for all~$n$. Note that this amounts to a $\Pi^0_1$-induction, which is available in~$\mathsf{RCA}_0$. Aiming at a contradiction, we assume $\dot m\leq_H\ddot n$ but~$m>n$. The latter entails $\dot n\in\dot m$, so that we get $\dot n\leq_H y$ for some~$y\in\ddot n$. In view of $\operatorname{supp}(\dot n)\ni 0\not\leq\star$ and $\operatorname{supp}(\dot n)\ni\star\not\leq 1$, we can now use Lemma~\ref{lem:H-order-basic} to infer $y\notin\{\star,1\}$. The only other possibility is that we have $y=\ddot k$ for some~$k<n$. But then $\dot n\leq_H y$ contradicts the induction hypothesis.
\end{proof}

Let us now derive the promised result.

\begin{corollary}[$\mathsf{RCA}_0$]\label{cor:one-two-three}
The order~$1\oplus 2$ is $\mathsf{bqo}$ precisely if the same holds for~$\overline 3$.
\end{corollary}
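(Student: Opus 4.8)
The plan is to prove the two directions of the equivalence separately; the work is concentrated entirely in one of them.

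First I would dispose of the implication that $\overline 3$ being $\mathsf{bqo}$ forces $1\oplus 2$ to be $\mathsf{bqo}$. Any bijection $f\colon 1\oplus 2\to\overline 3$ is order reflecting: since $\leq$ is just equality on $\overline 3$, the assumption $f(p)\leq f(q)$ forces $f(p)=f(q)$, hence $p=q$ and so $p\leq q$. By the preservation property of order reflecting maps recalled in the introduction, $1\oplus 2$ is then $\mathsf{bqo}$ whenever $\overline 3$ is. (One could instead quote that ``$\overline 3$ is $\mathsf{bqo}$'' already implies the same for all finite quasi orders, by Marcone and Freund.)

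For the converse, that $1\oplus 2$ being $\mathsf{bqo}$ forces $\overline 3$ to be $\mathsf{bqo}$, I would work contrapositively: starting from a hypothetical bad $\overline 3$-array $g\colon B\to\overline 3=\{a,b,c\}$, I aim to produce a bad array with values in $H_f(1\oplus 2)$, which by Proposition~\ref{prop:H(Q)-bqo} contradicts $1\oplus 2$ being $\mathsf{bqo}$. Here badness of $g$ only yields $g(s)\neq g(t)$ whenever $s\vartriangleleft t$. The point to record at the outset is that one cannot merely send $a,b,c$ to three pairwise incomparable elements: $H_f(1\oplus 2)$ contains no such triple, which is exactly why the interlocked copies of Proposition~\ref{prop:interlocked-copies} are needed. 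My approach would be to route the $b$- and $c$-coloured points of $B$ into the families $(\dot n)_{n\in\mathbb N}$ and $(\ddot n)_{n\in\mathbb N}$ respectively, exploiting the asymmetry $\ddot m\not\leq_{H(1\oplus 2)}\dot n$ to annihilate the ``$c$-then-$b$'' patterns for free and a choice of indices to annihilate the ``$b$-then-$c$'' patterns, while the $a$-coloured points are sent into a further part of $H_f(1\oplus 2)$ chosen incomparable with the values used for $b$ and $c$. Granting a workable assignment $g'\colon B\to H_f(1\oplus 2)$, badness of $g'$ would then follow from a finite case analysis over the six admissible patterns $(g(s),g(t))$ with $s\vartriangleleft t$, using Proposition~\ref{prop:interlocked-copies} and Lemma~\ref{lem:H-order-basic}.

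The hard part, which I expect to be the real obstacle, is pinning down this assignment on the $a$-coloured points. Since $\overline 3$ is genuinely ``wider'' than anything occurring inside $H_f(1\oplus 2)$, no assignment depending on the colour alone can succeed, so $g'(s)$ must read off information from the position of $s$ in the barrier; and because the relation $\vartriangleleft$ on a barrier is not well founded — it can carry infinite chains — there is no global rank function on $B$ to lean on. This is precisely where the interlocked, rather than independent, structure of the two copies, together with the fact that $g$ never repeats a colour along $\vartriangleleft$, must be brought to bear, via a more delicate use of the structure of $H_f(1\oplus 2)$ than the bare families $(\dot n)_n,(\ddot n)_n$. Once the combinatorics are fixed, the remaining verification is routine, and everything stays inside $\mathsf{RCA}_0$: the map $g'$ is obtained from $g$ by an explicit primitive recursive transformation, and Propositions~\ref{prop:H(Q)-bqo} and~\ref{prop:interlocked-copies} are $\mathsf{RCA}_0$-theorems.
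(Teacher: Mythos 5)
Your backward direction is fine and matches the paper. The forward direction, however, rests on a false premise and is then left incomplete exactly where the real content should be. You assert that $H_f(1\oplus 2)$ ``contains no triple of pairwise incomparable elements'' and conclude that you must instead build a bad $H_f(1\oplus 2)$-array directly from a bad $\overline 3$-array by a position-dependent recolouring of the barrier --- a construction you then admit you cannot pin down on the $a$-coloured points. In fact $H_f(1\oplus 2)$ \emph{does} contain an antichain of size three, and exhibiting one is the entire proof. The paper takes the three two-element sets
\begin{equation*}
\{\ddot 0,\dot 5\},\qquad\{\ddot 1,\dot 4\},\qquad\{\ddot 2,\dot 3\},
\end{equation*}
which are elements of $H_f(1\oplus 2)$ by clause~(ii) of Definition~\ref{def:H(Q)}. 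Pairwise incomparability is a short check from Proposition~\ref{prop:interlocked-copies}: e.g.\ $\{\ddot 0,\dot 5\}\not\leq_H\{\ddot 1,\dot 4\}$ because $\dot 5\not\leq_H\ddot 1$ and $\dot 5\not\leq_H\dot 4$ (as $5>1$ and $5>4$), and $\{\ddot 1,\dot 4\}\not\leq_H\{\ddot 0,\dot 5\}$ because $\ddot 1\not\leq_H\ddot 0$ and $\ddot 1\not\leq_H\dot 5$ (the latter by the asymmetry $\ddot m\not\leq_H\dot n$). The indices are chosen so that in each direction one member of the pair strictly dominates both candidates in the other pair.

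Once this antichain is in hand, there is an order embedding (in particular an order reflecting map) of $\overline 3$ into $H_f(1\oplus 2)$; so if $1\oplus 2$ is $\mathsf{bqo}$, Proposition~\ref{prop:H(Q)-bqo} makes $H_f(1\oplus 2)$ a $\mathsf{bqo}$, and composing a putative bad $\overline 3$-array with the embedding shows $\overline 3$ is $\mathsf{bqo}$ too --- no barrier combinatorics, no case analysis over $\vartriangleleft$-patterns, and nothing beyond $\mathsf{RCA}_0$. The ``hard part'' you flag is an artefact of the wrong premise: you only considered single elements $\dot n,\ddot n$ as candidate images, where indeed no three are pairwise incomparable, and overlooked that $H_f$ also contains all finite \emph{sets} of such elements, which is precisely what the passage to $H_f(Q)$ buys you.
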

\begin{proof}
    The backward direction is immediate, since there is an order reflecting map from $1\oplus 2$ into~$\overline 3$. To prove the forward direction via Proposition~\ref{prop:H(Q)-bqo}, we show that $H_f(1\oplus 2)$ contains an antichain of size three. Such an antichain is given by
    \begin{equation*}
        \{\ddot 0,\dot 5\},\quad\{\ddot 1,\dot 4\},\quad\{\ddot 2,\dot 3\}.
    \end{equation*}
    As a representative case, we explain why the first two elements are incomparable. Once again, we write $\leq_H$ for the inequality on~$H_f(1\oplus 2)$. Due to Proposition~\ref{prop:interlocked-copies} we have $\dot 5\not\leq_H\ddot 1$ and $\dot 5\not\leq_H\dot 4$, so that $\{\ddot 0,\dot 5\}\not\leq_H\{\ddot 1,\dot 4\}$ follows by Definition~\ref{def:H(Q)}. We also have $\ddot 1\not\leq_H\ddot 0$ and $\ddot 1\not\leq_H\dot 5$, which yields $\{\ddot 1,\dot 4\}\not\leq_H\{\ddot 0,\dot 5\}$.
\end{proof}

By the main result of~\cite{freund-3-bqo}, we can conclude the following.

\begin{corollary}\label{cor:1+2-bqo-strong}
    Arithmetic recursion along~$\mathbb N$ ($\mathsf{ACA}_0^+$) follows from the statement that $1\oplus 2$ is $\mathsf{bqo}$, over~$\mathsf{RCA}_0$. In particular, $\mathsf{ACA}_0$ cannot prove that $1\oplus 2$ is $\mathsf{bqo}$.
\end{corollary}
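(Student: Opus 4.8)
The plan is to derive the statement as an immediate consequence of the material already in place, namely Corollary~\ref{cor:one-two-three} of the present section together with the main theorem of~\cite{freund-3-bqo}. First I would recall from~\cite{freund-3-bqo} that, over~$\mathsf{RCA}_0$, the assertion ``$\overline 3$ is~$\mathsf{bqo}$'' implies arithmetic recursion along~$\mathbb N$. Then, arguing entirely inside~$\mathsf{RCA}_0$, I would invoke Corollary~\ref{cor:one-two-three} to substitute the hypothesis ``$\overline 3$ is~$\mathsf{bqo}$'' by the provably equivalent hypothesis ``$1\oplus 2$ is~$\mathsf{bqo}$''. This yields the first sentence: $\mathsf{RCA}_0$ proves that ``$1\oplus 2$ is~$\mathsf{bqo}$'' entails~$\mathsf{ACA}_0^+$.

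For the ``in particular'' clause, the remaining ingredient is the classical fact that~$\mathsf{ACA}_0$ does not prove~$\mathsf{ACA}_0^+$. I would justify this by the standard~$\omega$-model argument: the collection of all sets Turing-reducible to some finite iterate~$\emptyset^{(n)}$ of the Turing jump is an~$\omega$-model of~$\mathsf{ACA}_0$, but it does not contain~$\emptyset^{(\omega)}$ and hence fails to satisfy~$\mathsf{ACA}_0^+$. Consequently, if~$\mathsf{ACA}_0$ proved that~$1\oplus 2$ is~$\mathsf{bqo}$, then by the first sentence it would prove~$\mathsf{ACA}_0^+$, contradicting this separation.

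I do not expect any genuine obstacle here, since all of the substantive work has already been carried out --- on the one hand in the present section (Proposition~\ref{prop:interlocked-copies} and Corollary~\ref{cor:one-two-three}), on the other hand in~\cite{freund-3-bqo}. The only points that need a little care are citing the correct direction of the implication from~\cite{freund-3-bqo}, namely that ``$\overline 3$ is~$\mathsf{bqo}$'' is the \emph{hypothesis} rather than the conclusion, and recalling the standard model-theoretic separation of~$\mathsf{ACA}_0^+$ from~$\mathsf{ACA}_0$.
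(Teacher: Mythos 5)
Your proposal is correct and matches the paper's own (very brief) argument: combine Corollary~\ref{cor:one-two-three} with the main result of~\cite{freund-3-bqo}, then separate $\mathsf{ACA}_0^+$ from $\mathsf{ACA}_0$ by the standard $\omega$-model of sets computable from finite jumps. Nothing further is needed.
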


\section{A characterization of provable better quasi orders}

Our aim in the present section is to characterize those partial orders that are provably $\mathsf{bqo}$ when~$\overline 3$ is not. In view of Corollary~\ref{cor:one-two-three}, the following forbidden minor characterization will play a central role. The precise meaning of statement~(i) in the following result has been explained in the introduction.

\begin{proposition}[$\mathsf{RCA}_0$]\label{prop:1oplus2-sum-of-antichains}
For any partial order~$P$, the following are equivalent:
\begin{enumerate}[label=(\roman*)]
\item the order~$P$ is a linear sum of antichains,
\item the reflexive closure of incomparability in~$P$ is an equivalence relation,
\item there is no order embedding of~$1\oplus 2$ into~$P$.
\end{enumerate}
\end{proposition}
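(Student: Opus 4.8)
The plan is to prove the chain of implications $(ii)\Rightarrow(i)\Rightarrow(iii)\Rightarrow(ii)$, each of which should go through in $\mathsf{RCA}_0$ with only elementary manipulations. Throughout, write $x\parallel y$ for the reflexive closure of incomparability, i.e.\ $x\parallel y$ iff $x=y$ or ($x\not\preceq y$ and $y\not\preceq x$); reflexivity and symmetry are immediate, so $(ii)$ is really the assertion that $\parallel$ is transitive.

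For $(ii)\Rightarrow(i)$: assuming $\parallel$ is an equivalence relation, let $I=P/{\parallel}$ with the induced relation. First I would check that $\preceq$ is well defined on $\parallel$-classes, which is where transitivity of $\parallel$ is used: if $x\parallel x'$, $y\parallel y'$ and $x\preceq y$, one shows $x'\preceq y'$ — otherwise either $x'\parallel y'$ or $y'\prec x'$, and in each case a short case analysis using transitivity of $\preceq$ and of $\parallel$ yields a contradiction with $x\preceq y$ and the hypotheses. Then $I$ is linearly ordered: for distinct classes $[x],[y]$ we have $x\not\parallel y$, so $x\prec y$ or $y\prec x$, hence $[x]\prec[y]$ or $[y]\prec[x]$; antisymmetry on $I$ is clear. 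Each class is an antichain by construction, and the natural map exhibits $P$ as $\sum_{[x]\in I}Q_{[x]}$ with $Q_{[x]}$ the antichain on $[x]$. The only care needed in $\mathsf{RCA}_0$ is that all these objects are arithmetically (indeed $\Delta^0_1$ in the order relation of $P$) definable, which they are.

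For $(i)\Rightarrow(iii)$: suppose $P\cong\sum_{i\in I}Q_i$ with $I$ linear and each $Q_i$ an antichain, and suppose toward a contradiction that $g\colon 1\oplus 2\to P$ is an order embedding. Writing $1\oplus 2=\{\star,0,1\}$ with $0\prec 1$ the only strict inequality, let $g(\star)=(i,q)$, $g(0)=(j,r)$, $g(1)=(k,s)$. Since $g$ reflects and preserves order and $0\prec 1$, we need $(j,r)\prec(k,s)$, which by definition of the sum forces $j\prec k$ in $I$. Now $\star$ is incomparable to both $0$ and $1$, so $(i,q)$ is incomparable to $(j,r)$ and to $(k,s)$; but incomparability in a sum over a \emph{linear} index forces equal indices, so $i=j$ and $i=k$, contradicting $j\prec k$. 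Hence no such embedding exists.

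For $(iii)\Rightarrow(ii)$, argue by contraposition: if $\parallel$ fails to be transitive, there are $x,y,z\in P$ with $x\parallel y$, $y\parallel z$, but $x\not\parallel z$; since $x\not\parallel z$ the elements $x,z$ are comparable and distinct, say (after renaming) $x\prec z$. I would then verify that $x,y,z$ realise a copy of $1\oplus 2$: $x\prec z$ is the required strict inequality, and $y$ is incomparable to $x$ (from $x\parallel y$ and $x\neq y$) and incomparable to $z$ (from $y\parallel z$ and $y\neq z$), so mapping $0\mapsto x$, $1\mapsto z$, $\star\mapsto y$ gives an order embedding of $1\oplus 2$ into $P$. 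The one point to check is that the three elements are pairwise distinct, which follows since $x\neq z$, and $y=x$ or $y=z$ would give $x\parallel z$. The main obstacle is not any single step — all are short — but rather being careful that the well-definedness argument in $(ii)\Rightarrow(i)$ genuinely uses transitivity of $\parallel$ in the right places and that every construction stays within $\mathsf{RCA}_0$; both are routine once the case analyses are written out.
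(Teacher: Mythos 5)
Your proof is correct and follows essentially the same route as the paper: the substantive steps (quotienting $P$ by the reflexive closure of incomparability for (ii)$\Rightarrow$(i), and extracting a copy of $1\oplus 2$ from a failure of transitivity for (iii)$\Rightarrow$(ii)) coincide with the paper's argument. The only cosmetic difference is that you prove (i)$\Rightarrow$(iii) directly, whereas the paper routes this through (ii); both are immediate.
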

\begin{proof}
Clearly (i) implies~(ii). Since the reflexive closure of incomparability is not transitive in $1\oplus 2$, it is also not transitive in any order into which $1\oplus 2$  can be embedded, so that~(ii) implies (iii). 

To see that (iii) implies (ii), notice that any $P$ where the reflexive closure of incomparability is not transitive (which is the only reason why it could fail to be an equivalence relation) will have three distinct elements $x,y,z$ such that $x$ is incomparable with $y$, the latter is incomparable with $z$, but $x$ and $z$ are comparable. This clearly gives us an embedding of $1\oplus 2$ into $P$ (where the element of $1$ is mapped to $y$ and those of $2$ are mapped to $x$ and $z$).

Finally, we assume that an order $P$ satisfies (ii) and show that it also satisfies~(i). Clearly, equivalence classes with respect to the reflexive closure of incomparability are antichains. At the same time, the order $P$ is compatible with this equivalence relation: Indeed, if we have incomparable $x_1,x_2$ and some other element $y$ that is not incomparable with them, then $P$-comparisons of $x_1$ with $y$ and of $x_2$ with $y$ will agree, since otherwise transitivity would yield either $x_1<_P x_2$ or $x_1>_P x_2$. Let~$C$ be the quotient of $P$ by the reflexive closure of the incomparability relation. As~usual in reverse mathematics, we work with orders that are relations on subsets of~$\mathbb{N}$. For later reference, we officially define $C$ as the suborder of $P$ that contains the $<_\mathbb{N}$-smallest element of each equivalence class. Let us write $A(p)$ for the equivalence class that contains~$p\in C$. Then we have $P\cong\sum_{p\in C}A(p)$, as needed for (i).
\end{proof}

As recalled in the introduction, Marcone~\cite{marcone-survey-old} has shown that $\mathsf{RCA}_0$ proves the statement that $\overline 2$ is~$\mathsf{bqo}$. This explains the relevance of the following result.

\begin{corollary}[$\mathsf{RCA}_0$]\label{cor:quasi-emb-1oplus2}
For any partial order~$P$, the following are equivalent:
\begin{enumerate}[label=(\roman*)]
\item the order~$P$ is a linear sum of antichains with at most two elements each,
\item there is no order reflecting map from~$1\oplus 2$ into~$P$,
\item there is no embedding of~$1\oplus 2$ into~$P$ and no embedding of $\overline 3$ into~$P$.
\end{enumerate}
\end{corollary}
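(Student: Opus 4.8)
The plan is to prove Corollary~\ref{cor:quasi-emb-1oplus2} by chaining together Proposition~\ref{prop:1oplus2-sum-of-antichains} with a refinement that tracks the \emph{sizes} of the antichain summands. First I would establish the easy equivalences. For (ii)$\Rightarrow$(iii): an embedding is in particular order reflecting, so if there is no order reflecting map from $1\oplus 2$ then there is no embedding of $1\oplus 2$; and an embedding of $\overline 3$ would give an order reflecting map from $1\oplus 2$ into $P$ (compose an order reflecting map $1\oplus 2\to\overline 3$, which exists since $1\oplus 2$ and $\overline 3$ have the same underlying set and incomparability in $\overline 3$ is coarser, with the embedding $\overline 3\hookrightarrow P$), contradiction. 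For (iii)$\Rightarrow$(i): by Proposition~\ref{prop:1oplus2-sum-of-antichains}, the absence of an embedding of $1\oplus 2$ already gives $P\cong\sum_{p\in C}A(p)$ with each $A(p)$ an antichain; it remains to bound $|A(p)|\leq 2$, and if some $A(p)$ had at least three elements those three would constitute an embedding of $\overline 3$ into $P$, contradicting the second half of (iii).

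The remaining, slightly more careful direction is (i)$\Rightarrow$(ii). Suppose $P$ is (isomorphic to) a linear sum $\sum_{i\in I}Q_i$ with $I$ linear and each $|Q_i|\leq 2$, and suppose toward a contradiction that $f\colon 1\oplus 2\to P$ is order reflecting. Write $1\oplus 2=\{\star,0,1\}$ with $0<1$ the only strict inequality. Let $f(\star)=(i,q)$, $f(0)=(j,r)$, $f(1)=(k,s)$ in $\sum_{i\in I}Q_i$. From $0<1$ and order reflection applied contrapositively — note order reflection of $f$ does \emph{not} by itself force $f(0)\le f(1)$, so I must argue via the other comparabilities — the cleanest route is: $\star$ is incomparable with both $0$ and $1$ in $1\oplus 2$, hence (since $f$ is order reflecting) $f(\star)\not\le f(0)$, $f(0)\not\le f(\star)$, $f(\star)\not\le f(1)$, $f(1)\not\le f(\star)$, so $(i,q)$ is incomparable with both $(j,r)$ and $(k,s)$ in the linear sum. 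But in a linear sum $\sum_{i\in I}Q_i$ with $I$ linear, two elements $(a,x)$ and $(b,y)$ are incomparable exactly when $a=b$ and $x,y$ are incomparable in $Q_a$. Hence $i=j=k$, so $f(\star),f(0),f(1)$ all lie in the single summand $Q_i$, giving three pairwise-or-otherwise-constrained elements in a set of size $\le 2$ — already a contradiction once we check they are distinct, which follows because $f$ order reflecting on the antichain $\overline 2=\{0\}\cup\{\star\}$ (say) forces $f$ injective on any pair of incomparable elements, and $f(0)\ne f(1)$ since $f(0)<_P f(1)$ would be needed... more simply, $q,r,s\in Q_i$ with $q$ incomparable to both $r$ and $s$ in $Q_i$ forces $|Q_i|\ge 3$ unless $r=s$, and $r=s$ is impossible since then $f(0)=f(1)$ while $0<1$ yields $f(0)\le f(1)$ is consistent but order reflection would need... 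I would instead simply note $f(0)\ne f(1)$: if $f(0)=f(1)$ then from $f(1)\le f(1)=f(0)$ and order reflection we get $1\le 0$ in $1\oplus 2$, false. So $r\ne s$, and $\{q,r,s\}\subseteq Q_i$ are three distinct elements, contradicting $|Q_i|\le 2$.

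I expect the main obstacle to be purely bookkeeping: making sure the "linear sum" hypothesis is deployed correctly, since the key structural fact — \emph{in a linear sum over a linear index, incomparability is equivalent to living in the same summand and being incomparable there} — is exactly what makes the argument work and fails for general (non-linear) index sets. This is precisely the content already isolated in Proposition~\ref{prop:1oplus2-sum-of-antichains}, condition~(ii): $P$ being a linear sum of antichains is the same as the reflexive closure of incomparability being a (transitive) equivalence relation, so I could alternatively phrase (i)$\Rightarrow$(ii)--(iii) entirely through that proposition, observing that an order reflecting $f\colon 1\oplus 2\to P$ sends the incomparability-non-transitivity witness $\star,0,1$ to a triple violating transitivity of the reflexive closure of incomparability in $P$. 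That keeps everything at the level of the already-proved Proposition~\ref{prop:1oplus2-sum-of-antichains} plus the size bookkeeping for the $\overline 3$ clause, and avoids re-deriving facts about linear sums from scratch. Since all the ingredients are finitary and the reasoning is a finite case analysis, everything goes through in $\mathsf{RCA}_0$.
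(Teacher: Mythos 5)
Your proof is correct and follows essentially the same route as the paper: everything reduces to Proposition~\ref{prop:1oplus2-sum-of-antichains} plus the observation that an order reflecting map from $1\oplus 2$ either is an embedding or has as its image an antichain with three elements, which is exactly the one-line bookkeeping the paper records. Your more explicit verification of (i)$\Rightarrow$(ii) via the fact that incomparability in a linear sum forces membership in a single summand is a correct unpacking of the same idea.
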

\begin{proof}
Given the previous proposition, it suffices to note that a map from~$1\oplus 2$ into~$P$ is order reflecting but not an embedding precisely when the image is an antichain with three elements.
\end{proof}

To characterize the provable $\mathsf{bqo}$s of certain theories, we will combine the previous corollary with the following result. As indicated in the introduction, the corresponding result for $\mathsf{bqo}$-indexed sums can be established in the much stronger theory~$\mathsf{ATR}_0$ (form subarrays that are perfect with respect to the indexing order). The base theory $\mathsf{RCA}_0$ cannot suffice for this generalization, since it proves that $\overline 2$ is $\mathsf{bqo}$ but does not prove that the same holds for $\overline 4\cong\sum_{p\in\overline 2}\overline 2$.

\begin{proposition}[$\mathsf{RCA}_0$]\label{prop:wo-sum-bqo}
Any well-ordered sum of $\mathsf{bqo}$s is itself $\mathsf{bqo}$.
\end{proposition}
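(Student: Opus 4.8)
The plan is to argue by contradiction. Suppose $f\colon B\to\sum_{i\in I}Q_i$ is a bad array, and write $f(s)=(i_s,q_s)$ with $i_s\in I$ and $q_s\in Q_{i_s}$. The first observation is that the index map $s\mapsto i_s$ cannot increase along $\vartriangleleft$: if $s\vartriangleleft t$ then $f(s)\not\leq f(t)$ forces $i_s\not\prec i_t$, and since $I$ is linear this gives $i_t\preceq i_s$. The strategy is then to pass to a sub-barrier on which the index map is \emph{constant}, say with value $j$; on such a sub-barrier $f$ takes all its values in the fibre $\{(j,q)\mid q\in Q_j\}$, whose induced order is exactly $\leq_j$, so we obtain a bad $Q_j$-array, contradicting that $Q_j$ is $\mathsf{bqo}$.

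The combinatorial heart of the proof is the following lemma, provable in $\mathsf{RCA}_0$: if $s,t\in B$ satisfy $\max s<\min t$, then there is a finite $\vartriangleleft$-chain $s=u_0\vartriangleleft u_1\vartriangleleft\dots\vartriangleleft u_{|s|}=t$ inside $B$. To prove it, let $X\in[\bigcup B]^\omega$ be the sequence that lists $s$, then $t$, then the elements of $\bigcup B$ above $\max t$, and for $j\leq|s|$ let $u_j$ be the unique member of $B$ that is an initial segment of the sequence obtained from $X$ by deleting its first $j$ entries; this is well defined since $B$ is a barrier, and one checks $u_0=s$, $u_{|s|}=t$, and $u_j\vartriangleleft u_{j+1}$. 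Combined with the first observation, this yields: for every $s_0\in B$, any $s\in B$ with $\min s>\max s_0$ satisfies $i_s\preceq i_{s_0}$. I will also use, all being routine in $\mathsf{RCA}_0$, that $B{\restriction}Y:=\{s\in B\mid s\subseteq Y\}$ is a barrier with base $Y$ for every infinite $Y\subseteq\bigcup B$, that the relation $\vartriangleleft$ depends only on the two sequences involved, and hence that the restriction of the bad array $f$ to any such $B{\restriction}Y$ is again bad.

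Now I would build, by primitive recursion, elements $s_0,s_1,\dots$ of $B$ together with infinite sets $\bigcup B=Y_{-1}\supseteq Y_0\supseteq Y_1\supseteq\dots$ as follows. Having chosen $s_n\in B{\restriction}Y_{n-1}$, put $Y_n:=\{m\in Y_{n-1}\mid m>\max s_n\}$; by the consequence of the chain lemma, the index map is $\preceq i_{s_n}$ everywhere on $B{\restriction}Y_n$. If the index map is in fact constant on $B{\restriction}Y_n$ (necessarily with value $i_{s_n}$), stop: then $f{\restriction}(B{\restriction}Y_n)$ is a bad array into the fibre over $i_{s_n}$, contradicting that $Q_{i_{s_n}}$ is $\mathsf{bqo}$ as explained above. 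Otherwise choose $s_{n+1}\in B{\restriction}Y_n$ with $i_{s_{n+1}}\prec i_{s_n}$ and continue. If the recursion never halts, then $i_{s_0}\succ i_{s_1}\succ\dots$ is an infinite descending sequence in $I$, contradicting that $I$ is well-founded. Either way we reach a contradiction, so $\sum_{i\in I}Q_i$ is $\mathsf{bqo}$.

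The main obstacle I anticipate is the chain lemma and the bookkeeping needed to run the whole argument in $\mathsf{RCA}_0$: one should verify that the unique barrier-initial-segments $u_j$ are obtained by a terminating (though unbounded) search from $B$ and $X$, that each $B{\restriction}Y_n$ is genuinely a barrier with the expected base, and that the dichotomy ``index constant on $B{\restriction}Y_n$ versus pick a strictly smaller index'' is arranged so that no induction beyond $\Sigma^0_1$ is required; the least-value variant (``let $j$ be the $\preceq$-least value of the index map, which exists since $I$ is well-founded'') could be used instead, at the cost of the same kind of verification. Everything else is just unwinding the definition of $\sum_{i\in I}Q_i$.
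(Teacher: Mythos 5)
Your proof is correct and follows essentially the same route as the paper: show the index map is non-increasing along $\vartriangleleft$ via a finite $\vartriangleleft$-chain between $s$ and any $t$ with $\min t>\max s$, use well-foundedness of $I$ to find a subbarrier on which the index is constant, and extract a bad array into a single summand. The only (harmless) slip is the parenthetical claim that the constant value must be $i_{s_n}$ --- it is merely some $j\preceq i_{s_n}$, since $s_n\notin B{\restriction}Y_n$ --- and the dichotomy you flag at the end is indeed handled in the paper by a single case split (\emph{either} some $B/s$ has constant index, \emph{or} every $s$ admits a $\prec$-decreasing successor, in which case taking least witnesses gives a $\Delta^0_1$ descending sequence).
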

\begin{proof}
    Let us consider a bad array $f:B\to\sum_{i\in I}Q_i$ into a well-ordered sum. We write $f_0:B\to I$ for the composition of~$f$ with the map $\sum_{i\in I}Q_i\ni(i,q)\mapsto i\in I$. Given $s\in B$, we define $B/s$ as the subbarrier that consists of all $t\in B$ that have minimal element strictly above the maximal element of $s$. For $t\in B/s$ we find intervals $r^i\in B$ of $s\cup t$ that form a chain $s=r^0\vartriangleleft\ldots\vartriangleleft r^n=t$. Given that $f$ is bad, we must have $f_0(s)=f_0(r^0) \ge_I \ldots \ge_I f_0(r^n)=f_0(t)$. 

    Suppose that for all $s \in B$ there exists $t \in B/s$ with $f_0(s) >_I f_0(t)$. Then, by choosing the least $t$ (with respect to $\leq_{\mathbb N}$), we can define a descending sequence in~$I$, against the fact that $I$ is well-founded. Hence there must be an element $s\in B$ such that $f_0$ assumes the constant value~$i=f_0(s)\in I$ on~$B/s$. But then $f_1:B/s\to Q_i$ with $f(t)=(i,f_1(t))$ is bad, so that $Q_i$ cannot have been a better quasi order.
\end{proof}

We now derive a characterization of provable better quasi orders. In the infinite case, we consider partial orders as living in the \lq\lq real world\rq\rq\ and being represented in subsystems of second-order arithmetic by appropriate descriptions, so that intensional aspects play a role (consider $\mathbb N$ as a linear order if some large cardinal notion is consistent and as an antichain otherwise). Furthermore, since $\mathsf{RCA}_0$ proves that a linear order is $\mathsf{bqo}$ precisely if it is well-founded, the infinite case relates to proof-theoretic ordinals. For these reasons, we begin with the more straightforward case of finite orders, which we assume to be represented by a fixed standard system of numerical codes (say via incidence matrices). If a finite order has a property that is given by a $\Sigma^0_1$-condition on the code (can be established by a finite verification), then this fact from the \lq\lq real world\rq\rq\ can already be proved in $\mathsf{RCA}_0$, by the principle of $\Sigma^0_1$-completeness (see, e.\,g., Theorem~I.1.8 of~\cite{hajek91}).

Let us note that $\mathsf{ACA}_0$ satisfies the conditions on~$\mathsf T$ in the following result; also, if some theory $\mathsf T\supseteq\mathsf{RCA}_0$ proves that $\overline 3$ is bqo, it proves the same for any finite partial order (see Corollaries~2.13 and 3.9 of~\cite{freund-3-bqo}).

\begin{theorem}\label{thm:prov-bqo-fin}
Consider a theory $\mathsf T$ in the language of second order arithmetic that extends $\mathsf{RCA}_0$ and does not prove that the antichain with three elements is a better quasi order. For any finite partial order~$P$, the following are equivalent:
\begin{enumerate}[label=(\roman*)]
\item the theory $\mathsf T$ proves that $P$ is a better quasi order,
\item the order~$P$ is a linear sum of antichains with at most two elements each.
\end{enumerate}
\end{theorem}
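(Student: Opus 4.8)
The plan is to prove (ii)$\Rightarrow$(i) directly and (i)$\Rightarrow$(ii) by contraposition, in both cases reducing everything to results already established together with the fact that $\mathsf{RCA}_0$ is $\Sigma^0_1$-complete, so that the relevant finitary facts about the \emph{fixed} codes of $P$, of $1\oplus 2$, and of certain maps transfer from the ``real world'' into $\mathsf{RCA}_0$ (this is exactly the set-up discussed just before the statement).

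For (ii)$\Rightarrow$(i), suppose that the finite order $P$ is, in the real world, a linear sum of antichains of size at most two. Being finite, $P$ is then isomorphic to a finite sum $\sum_{i\in I}A_i$ with $I$ a finite linear order and each $A_i$ equal to $\overline 1$ or $\overline 2$; fix such a decomposition. The existence of this decomposition is a true $\Sigma^0_1$ statement about the fixed code of $P$, so $\mathsf{RCA}_0$ proves it. Now $\mathsf{RCA}_0$ proves that $\overline 1$ is $\mathsf{bqo}$ (trivially) and that $\overline 2$ is $\mathsf{bqo}$ (Marcone~\cite{marcone-survey-old}), and it proves that the finite linear order $I$ is well-founded; hence by Proposition~\ref{prop:wo-sum-bqo} it proves that $\sum_{i\in I}A_i$, and therefore $P$, is $\mathsf{bqo}$. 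In particular $\mathsf T$ proves that $P$ is $\mathsf{bqo}$.

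For (i)$\Rightarrow$(ii), assume that $P$ is not a linear sum of antichains of size at most two. By Corollary~\ref{cor:quasi-emb-1oplus2}, there is in the real world an order reflecting map $g\colon 1\oplus 2\to P$. Since $g$, $1\oplus 2$ and $P$ are all finite, the existence of such a map is again a true $\Sigma^0_1$ statement, so $\mathsf{RCA}_0$ proves that there is an order reflecting map from $1\oplus 2$ into $P$. Suppose for contradiction that $\mathsf T$ proves $P$ to be $\mathsf{bqo}$. As recalled in the introduction, $\mathsf{RCA}_0$ proves that the domain of an order reflecting map into a $\mathsf{bqo}$ is itself $\mathsf{bqo}$; hence $\mathsf T$ proves that $1\oplus 2$ is $\mathsf{bqo}$, and then Corollary~\ref{cor:one-two-three} shows that $\mathsf T$ proves $\overline 3$ to be $\mathsf{bqo}$, contrary to the hypothesis on $\mathsf T$. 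So $\mathsf T$ does not prove that $P$ is $\mathsf{bqo}$, which is the contrapositive of (i)$\Rightarrow$(ii).

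I expect no serious obstacle here: the argument is essentially an assembly of Corollary~\ref{cor:quasi-emb-1oplus2}, Corollary~\ref{cor:one-two-three}, Proposition~\ref{prop:wo-sum-bqo}, and Marcone's theorem that $\overline 2$ is $\mathsf{bqo}$. The only point that needs care is the correct invocation of $\Sigma^0_1$-completeness to move the finitary witnesses (the decomposition in one direction, the order reflecting map from $1\oplus 2$ in the other) from the real world into $\mathsf{RCA}_0$; this is already prepared by the remarks preceding the theorem, so it should be a matter of a sentence each. It is also worth noting that the proof actually shows the sharper statement that in case (ii) the order $P$ is provably $\mathsf{bqo}$ already in $\mathsf{RCA}_0$.
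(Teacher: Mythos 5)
Your proof is correct and follows essentially the same route as the paper's: $\Sigma^0_1$-completeness to transfer the finitary witnesses (the decomposition in one direction, the order reflecting map from $1\oplus 2$ in the other), Corollary~\ref{cor:quasi-emb-1oplus2} and Corollary~\ref{cor:one-two-three} for (i)$\Rightarrow$(ii), and Proposition~\ref{prop:wo-sum-bqo} together with Marcone's theorem that $\overline 2$ is $\mathsf{bqo}$ for (ii)$\Rightarrow$(i). Your closing remark that the positive direction already goes through in $\mathsf{RCA}_0$ is also implicit in the paper's argument.
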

\begin{proof}
    First assume that~(ii) fails. Then Corollary~\ref{cor:quasi-emb-1oplus2} ensures that there is an order reflecting map from $1\oplus 2$ into~$P$. This fact is recognized by~$\mathsf T$ due to the principle of $\Sigma^0_1$-completeness. So $\mathsf T$ proves that $1\oplus 2$ is $\mathsf{bqo}$ if the same holds for~$P$. In view of Corollary~\ref{cor:one-two-three}, we can conclude that~(i) fails unless $\mathsf T$ proves that $\overline 3$ is $\mathsf{bqo}$.

    Let us now assume that (ii) holds. As we are concerned with finite orders, this fact is recognized by~$\mathsf T$ due to $\Sigma^0_1$-completeness, and a linear sum is the same as a well-ordered sum. Also, $\mathsf T$ knows that antichains of size at most two are~$\mathsf{bqo}$, due to Lemma~3.2 of~\cite{marcone-survey-old}. We get~(i) by Proposition~\ref{prop:wo-sum-bqo} above.
\end{proof}

As in the case of Theorem~\ref{thm:prov-bqo-fin}, the following result extends to other theories that do not prove that $\overline 3$ is~$\mathsf{bqo}$. We formulate the result for a specific case in order to avoid a general discussion of standard notation systems for proof-theoretic ordinals. The order $\overline 2\cdot\gamma$ is defined to be $\sum_{\beta\in\gamma}P_\beta$ with $\gamma=\{\beta\in\varepsilon_0\,|\,\beta<\gamma\}\subseteq\varepsilon_0$ and $P_\beta=\overline 2$ for all $\beta\in\gamma$. In other words, it is the lexicographic order on~$\gamma\times\overline 2$, which corresponds to the usual ordinal arithmetic (note the reverse order of factors). 

\begin{theorem}\label{thm:prov-bqo-inf}
For any partial order~$Q$, the following are equivalent:
\begin{enumerate}[label=(\roman*)]
\item the order $Q$ is isomorphic to a computably enumerable suborder of~$\overline 2\cdot\gamma$ for some $\gamma<\varepsilon_0$ (where $\varepsilon_0$ is represented by standard notations as in~\mbox{\cite[\S\,11]{takeuti-book}}),
\item there is a computable presentation of an order~$P\cong Q$ for which $\mathsf{ACA}_0$ shows that $P$ is a better quasi order.
\end{enumerate}
\end{theorem}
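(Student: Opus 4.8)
The plan is to prove the two implications separately, in each case invoking the classical ordinal analysis of $\mathsf{ACA}_0$, whose proof-theoretic ordinal is $\varepsilon_0$.

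For the implication from~(i) to~(ii), assume $Q\cong S$ for a computably enumerable set $S\subseteq\overline 2\cdot\gamma$ with $\gamma<\varepsilon_0$. I would fix a computable injective enumeration of $S$ (or note that $S$ is finite, hence computable) and let $P$ be the computable order whose underlying set is $\mathbb N$, or an initial segment of it, and whose order relation is pulled back from $\overline 2\cdot\gamma$ along this enumeration. By construction the enumeration is then a computable order reflecting map $P\to\overline 2\cdot\gamma$, and $\mathsf{RCA}_0$ proves this. It thus suffices to show that $\mathsf{ACA}_0$ proves $\overline 2\cdot\gamma$ to be $\mathsf{bqo}$: by Gentzen's theorem (see, e.g.,~\cite{rathjen-realm}), $\mathsf{ACA}_0$ proves well-foundedness of the standard notations below~$\gamma$, so that $\overline 2\cdot\gamma=\sum_{\beta<\gamma}\overline 2$ is provably a well-ordered sum; each summand $\overline 2$ is provably $\mathsf{bqo}$ by Lemma~3.2 of~\cite{marcone-survey-old}, so Proposition~\ref{prop:wo-sum-bqo} gives that $\overline 2\cdot\gamma$ is $\mathsf{bqo}$, and hence so is~$P$, which reflects into it.

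For the implication from~(ii) to~(i), let $P\cong Q$ be a computable presentation with $\mathsf{ACA}_0\vdash$ ``$P$ is $\mathsf{bqo}$''. I would first note that $P$ embeds neither $\overline 3$ nor $1\oplus 2$: the existence of either embedding is a $\Sigma^0_1$ statement about the computable structure~$P$, hence would be provable in $\mathsf{RCA}_0$ by $\Sigma^0_1$-completeness, and would force $\mathsf{ACA}_0\vdash$ ``$\overline 3$ is $\mathsf{bqo}$'' --- directly in the case of~$\overline 3$ and via Corollary~\ref{cor:one-two-three} in the case of~$1\oplus 2$ --- contradicting the hypothesis on $\mathsf{ACA}_0$, since $\overline 3$ being $\mathsf{bqo}$ entails $\mathsf{ACA}_0^+$ by~\cite{freund-3-bqo}. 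By Corollary~\ref{cor:quasi-emb-1oplus2} we may therefore write $P\cong\sum_{i\in L}P_i$ with $L$ linear and each $P_i$ an antichain of size at most two; moreover the construction in Proposition~\ref{prop:1oplus2-sum-of-antichains} realizes $L$ as a \emph{computable} linear suborder of~$P$ (the reflexive closure of incomparability is a computable equivalence relation on the computable order~$P$) and makes the isomorphism computable. Finally, since $\mathsf{ACA}_0$ proves $P$ to be $\mathsf{bqo}$, hence $\mathsf{wqo}$, hence without infinite descending sequence, it proves the same for the suborder~$L$.

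The crux is to pass from the computable linear order~$L$ with $\mathsf{ACA}_0\vdash$ ``$L$ is well-founded'' to a standard notation $\gamma<\varepsilon_0$ together with a \emph{computable} order embedding $h:L\to\gamma$. That $L$ has order type $<\varepsilon_0$ already follows from $\varepsilon_0$ being the proof-theoretic ordinal of $\mathsf{ACA}_0$, together with the observation that this supremum is not attained (replace $L$ by $L+1$); but to obtain a computably enumerable suborder of $\overline 2\cdot\gamma$, rather than a merely abstract embedding, I would need the effective content of Gentzen's analysis, namely that a derivation of the well-foundedness of~$L$ in~$\mathsf{ACA}_0$ yields, through cut elimination, a computable order preserving map of~$L$ into the standard notations below some $\gamma<\varepsilon_0$ (see~\cite{takeuti-book,rathjen-realm}). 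I expect locating this input in the literature and stating it at the right level of generality to be the main difficulty. Granting it, I would conclude by composing the computable isomorphism $P\cong\sum_{i\in L}P_i$, the inclusion $\sum_{i\in L}P_i\hookrightarrow\sum_{i\in L}\overline 2=\overline 2\cdot L$, and the map $\overline 2\cdot L\to\overline 2\cdot\gamma$ induced by~$h$; the resulting composite is a computable order embedding of~$P$ into $\overline 2\cdot\gamma$, so that its image is a computably enumerable suborder of $\overline 2\cdot\gamma$ isomorphic to~$Q$, as required.
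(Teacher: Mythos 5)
Your proof follows essentially the same route as the paper: the same construction of a computable presentation from an injective enumeration for (i)$\Rightarrow$(ii), and for (ii)$\Rightarrow$(i) the same use of $\Sigma^0_1$-completeness to exclude $1\oplus 2$ and $\overline 3$, the same decomposition into a linear sum of antichains over the computable chain of class representatives, and the same appeal to the effective boundedness form of Gentzen's ordinal analysis. The input you flag as the main difficulty is exactly what the paper invokes, citing Theorem~13.4 of~\cite{takeuti-book} for the computable order preserving map of the provably well-founded linear suborder into some $\gamma<\varepsilon_0$.
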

\begin{proof}
To see that~(i) implies~(ii), recall that $\mathsf{ACA}_0$  has proof-theoretic ordinal~$\varepsilon_0$. So for each $\gamma<\varepsilon_0$ that is fixed externally, $\mathsf{ACA}_0$ proves that $\gamma$ is well-founded (see, e.\,g.,~\cite[\S\,13]{takeuti-book} for this result due to G.~Gentzen~\cite{gentzen38,gentzen43}). By Proposition~\ref{prop:wo-sum-bqo} above and Lemma~3.2 of~\cite{marcone-survey-old}, it follows that $\mathsf{ACA}_0$ proves $\overline 2\cdot\gamma=\sum_{\beta<\gamma}\overline 2$ to be $\mathsf{bqo}$.

Now assume $Q$ is isomorphic to the image of a computable function $e:\mathbb N\to\overline 2\cdot\gamma$. Let $P$ be the order on pairs $(p,n)\in(\overline 2\cdot\gamma)\times\mathbb N$ such that $n$ is minimal with~$e(n)=p$, where $(p,m)\leq(q,n)$ holds in $P$ precisely if we have $p\leq q$ in $\overline 2\cdot\gamma$. Then $P$ is computable and isomorphic to $Q$ via the projection $(p,n)\mapsto p$. In $\mathsf{ACA}_0$ we know that each~$p$ admits at most one~$n$ with $(p,n)\in P$ (by definition of~$P$), which entails that $P$ is a partial order and that the projection is an embedding into~$\overline 2\cdot\gamma$. By the above, the fact that~$P$ is $\mathsf{bqo}$ can thus be proved in~$\mathsf{ACA}_0$.

For the converse implication, we consider a computable presentation of a partial order~$P$ as in~(ii). We may assume that $\mathsf{ACA}_0$ proves $P$ to be a partial order (rather than just a quasi order), by considering another computable description that picks minimal codes among equivalent elements. Let $C\subseteq P$ be defined as in the proof of Proposition~\ref{prop:1oplus2-sum-of-antichains}. We later show that $P$ is isomorphic to $\sum_{p\in C}A(p)$ as in that~proof, but this fact may not be available in~$\mathsf{ACA}_0$ (which may not know that (i) holds). However, the latter does recognize that $C$ is a linear suborder, which must be a well order when~$P$ is~$\mathsf{bqo}$. Due to the ordinal analysis of Gentzen (see Theorem~13.4 of~\cite{takeuti-book}), we thus get a computable embedding $f:C\to\gamma$ for some~$\gamma<\varepsilon_0$.

We now show that there can be no order reflecting map from $1\oplus 2$ into~$P$. If there was, $\mathsf{ACA}_0$ would recognize this, due to $\Sigma^0_1$-completeness. Given that $\mathsf{ACA}_0$ proves $P$ to be $\mathsf{bqo}$, it would prove the same for $1\oplus 2$, against Corollary~\ref{cor:1+2-bqo-strong}. By the proofs of Proposition~\ref{prop:1oplus2-sum-of-antichains} and Corollary~\ref{cor:quasi-emb-1oplus2}, it follows that $P$ is isomorphic to a certain sum $\sum_{p\in C}A(p)$ of antichains $A(p)\ni p$ with at most two elements. Using~$f:C\to\gamma$ from above, we obtain an embedding $g:\sum_{p\in C}A(p)\to\overline 2\cdot\gamma$ that is given by $g(p)=(f(p),0)$ for $p\in C$ and $g(q)=(f(p),1)$ for $q\in A(p)\backslash\{p\}$ (recall that the order $\overline 2\cdot\gamma$ has underlying set $\gamma\times\overline 2$). The image of~$g$ is the computably enumerable suborder required by~(i).
\end{proof}

Let us point out that statement~(ii) of Theorem~\ref{thm:prov-bqo-inf} is parallel to classical characterizations of provable well orders. In particular, these also involve the choice of a suitable presentation, given that the usual order on~$\mathbb N$ admits a non-standard description that looks ill-founded unless some strong consistency statement is valid (see, e.\,g., \cite[Section~2.1]{rathjen-realm} for this observation by G.~Kreisel). In contrast, statement~(i) of Theorem~\ref{thm:prov-bqo-inf} is slightly less straightforward than in the case of linear orders. This is because any suborder of~$\varepsilon_0$ is isomorphic to an initial segment and hence to a computable suborder. Since computability is automatic in this sense, the classical results on provable well orders can avoid reference to a standard notation system. In the case of~$\overline 2\cdot\gamma$, the components from $\overline 2$ may encode non-computable information when the components from $\gamma$ are collapsed onto an initial segment, so that computability is not automatic in the same sense. To obtain a version of our result that does not involve a choice of ordinal notations, one could consider orders that are $\Delta^1_1$ rather than computable.

\section{Minimal bad arrays over weak base theories}\label{sect:mba}

It was recently shown by Freund, Pakhomov and Sold\`a~\cite{bad-array-Pi^1_2-CA} that different versions of the minimal bad array principle are equivalent to $\Pi^1_2$-comprehension over~$\mathsf{ATR}_0$. Here we prove that one such version does not entail $\mathsf{ATR}_0$ over~$\mathsf{ACA}_0$. We also show that another version entails at least $\mathsf{ACA}_0$ over~$\mathsf{RCA}_0$.

According to the introduction, a $Q$-array is a function $f:B\to Q$ on a barrier~$B$. Such a function induces a map $F:[\bigcup B]^\omega\to Q$ with $F(X)=f(t)$ for $B\ni t\sqsubset X$. It is straightforward to see that $f$ is bad precisely if we have $F(X)\not\leq_Q F(X^-)$ for all $X\in[\bigcup B]^\omega$. Let us note that $F$ is continuous in the sense that each~$X\in[\bigcup B]^\omega$ admits an $s\sqsubset X$ such that $F$ is constant on~$\{Y\in[\bigcup B]^\omega\,|\,s\sqsubset Y\}$. Conversely, any map $F:[V]^\omega\to Q$ with $V\in[\mathbb N]^\omega$ that is continuous in this sense is induced by a function $f_0:B_0\to Q$ on a block with base $\bigcup B_0=V$. Here~$B_0$ may not be a barrier. At the same time, it is known from~\cite{cholak-RM-wpo} that the rather weak theory~$\mathsf{WKL}_0$ supports the construction of a barrier~$B$ with $\bigcup B\subseteq\bigcup B_0$ such that each $t\in B$ admits an $s\in B_0$ with~$s\sqsubseteq t$. The continuous map that is induced by $f:B\to Q$ with $f(t)=f_0(s)$ for $B_0\ni s\sqsubseteq t$ is a restriction of the map $F$ that we started with.

In the following, we assume that any continuous $F:[V]^\omega\to Q$ with $V\in[\mathbb N]^\omega$ is given as a function on a block that induces it. By a slight abuse of terminology, such an $F$ will also be called a $Q$-array. We say that it is bad when $F(X)\not\leq_Q F(X^-)$ holds for all~$X\in[V]^\omega$. As we have seen, the theory $\mathsf{WKL}_0$ ensures that $Q$ is $\mathsf{bqo}$ precisely when there is no bad $Q$-array in this new sense. In the absence of~$\mathsf{WKL}_0$, we insist on the previous definition of $\mathsf{bqo}$s in terms of arrays on barriers. We now introduce some notions that will occur in the minimal bad array principle.

\begin{definition}[$\mathsf{RCA}_0$]
    A partial ranking of a quasi order~$Q$ is a well-founded partial order~$\leq'$ on~$Q$ such that $p\leq'q$ entails $p\leq_Q q$. Given such a ranking, we write $F\leq' G$ for $Q$-arrays $F:[V]^\omega\to Q$ and $G:[W]^\omega\to Q$ if we have $V\subseteq W$ and $F(X)\leq'G(X)$ for all~$X\in[V]^\omega$. If we even have $F(X)<'G(X)$ for all such~$X$, then we write $F<'G$. By a $\leq'$-minimal bad $Q$-array we mean a bad $Q$-array~$G$ that admits no bad $Q$-array $F<'G$.
\end{definition}

Let us note that each well-founded partial order is a partial ranking of itself. We consider the following versions of the minimal bad array principle, which refer to arrays in the sense of continuous functions $[V]^\omega\to Q$ with $V\in[\mathbb N]^\omega$.
\begin{alignat*}{3}
&(\mathsf{MBA})\quad&& \parbox[t]{.7\textwidth}{When $\leq'$ is a partial ranking of a quasi order~$Q$, any bad $Q$-array $F_0$ admits a $\leq'$-minimal bad $Q$-array $F\leq' F_0$.}\\
&(\mathsf{MBA}^-)\quad&& \parbox[t]{.7\textwidth}{For each well-founded partial order~$Q$ that is no~$\mathsf{bqo}$, there is a $\leq_Q$-minimal bad $Q$-array.}
\end{alignat*}
In~\cite{bad-array-Pi^1_2-CA}, the principle $\mathsf{MBA}$ has been studied as `Simpson's version of the minimal bad array lemma'. Its formulation does indeed coincide with one that was given by Simpson~\cite{simpson-borel-bqos}, except that the latter works with a larger class of Borel measurable rather than continuous arrays (see the introduction of our paper). In~\cite{bad-array-Pi^1_2-CA}, the following equivalence includes yet another version of the minimal bad array principle, which goes back to work of Nash-Williams and has been isolated by Laver~\cite{laver-min-array}.

\begin{theorem}[$\mathsf{ATR}_0$;~\cite{bad-array-Pi^1_2-CA}]
    Each of $\mathsf{MBA}$ and $\mathsf{MBA}^-$ is equivalent to the strong set existence principle of $\Pi^1_2$-comprehension.
\end{theorem}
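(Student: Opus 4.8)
The plan is to establish the two equivalences by closing a cycle of implications over~$\mathsf{ATR}_0$: first $\Pi^1_2\text{-}\mathsf{CA}$ implies $\mathsf{MBA}$, then $\mathsf{MBA}$ implies $\mathsf{MBA}^-$, and finally $\mathsf{MBA}^-$ implies $\Pi^1_2\text{-}\mathsf{CA}$; this suffices since it forces $\mathsf{MBA}$, $\mathsf{MBA}^-$ and $\Pi^1_2\text{-}\mathsf{CA}$ to be pairwise equivalent. The middle step is the routine one and I would dispatch it as follows. Suppose $Q$ is a well-founded partial order that is not~$\mathsf{bqo}$. Since the ambient theory proves $\mathsf{WKL}_0$, the block- and barrier-based notions of array agree, as recalled before the theorem, so failure of~$\mathsf{bqo}$ supplies a bad $Q$-array~$F_0$ in the continuous-function sense; moreover $\leq_Q$ is trivially a partial ranking of~$Q$. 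Applying~$\mathsf{MBA}$ to the ranking~$\leq_Q$ and the array~$F_0$ yields a $\leq_Q$-minimal bad $Q$-array~$F\leq_Q F_0$, which is exactly what~$\mathsf{MBA}^-$ asserts. (The Nash-Williams/Laver form of the principle, if one wishes to include it in the equivalence, enters the cycle in the same position.) The two remaining implications carry the real content, and I expect the reversal $\mathsf{MBA}^-\Rightarrow\Pi^1_2\text{-}\mathsf{CA}$ to be the main obstacle.

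For $\Pi^1_2\text{-}\mathsf{CA}\Rightarrow\mathsf{MBA}$ I would revisit the classical Nash-Williams/Laver construction of a minimal bad array and verify that $\Pi^1_2\text{-}\mathsf{CA}_0$ suffices to carry it out. Given a partial ranking~$\leq'$ of~$Q$ and a bad $Q$-array~$F_0$, one builds, by successively passing to subarrays (a thinning of the underlying barrier), a bad array whose value at each successive position is $\leq'$-minimal among the values occurring at that position in a bad subarray below the part already built, and one then verifies that the resulting array is itself bad and $\leq'$-minimal. The key observation is that the relevant decisions, of the form ``some bad $Q$-array extends the following finite partial specification'', are $\Sigma^1_2$, since both ``is a block'' and ``is bad'' are $\Pi^1_1$; hence $\Sigma^1_2\text{-}\mathsf{CA}_0$, which is equivalent to $\Pi^1_2\text{-}\mathsf{CA}_0$ and also yields $\Sigma^1_2\text{-}\mathsf{AC}_0$ and $\Delta^1_2\text{-}\mathsf{CA}_0$, both settles each decision and permits the formation of the diagonal limit array, while well-foundedness of~$\leq'$ forces termination. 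The delicate point, and the reason the naive argument does not obviously give the result with exactly $\Pi^1_2\text{-}\mathsf{CA}_0$ rather than a stronger transfinite-recursion principle, is that the construction is naturally an $\omega$-length recursion each step of which settles a $\Sigma^1_2$ question; one must check that the closure properties and induction available in $\Pi^1_2\text{-}\mathsf{CA}_0$ suffice to run it, or reorganise the construction so that the minimal array admits a direct $\Delta^1_2$ description.

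For the reversal I would fix a concrete target. Over~$\mathsf{ACA}_0$, every $\Sigma^1_2$ formula is equivalent to one of the form $\exists X\,(T(X)\text{ is well-founded})$ with~$T(X)$ an arithmetically presented tree, so $\Sigma^1_2\text{-}\mathsf{CA}$ amounts to the statement that for every uniform family $\langle T_n(X)\rangle$ the set $\{\,n : \exists X\ (T_n(X)\text{ is well-founded})\,\}$ exists. The plan is to construct, in~$\mathsf{ATR}_0$ and uniformly from such a family, a countable well-founded partial order~$Q=\sum_n Q_n$ whose summands are gadgets built from a variant of Rado's well-founded-but-not-$\mathsf{bqo}$ partial order, interleaved with structure coming from the trees~$T_n(X)$, so that: (a) $Q$ is not~$\mathsf{bqo}$, so $\mathsf{MBA}^-$ produces a $\leq_Q$-minimal bad $Q$-array~$G$; and (b) $\leq_Q$-minimality is rigid enough that, for each~$n$, the behaviour of~$G$ on the $Q_n$-component is forced to encode the truth value of $\exists X\,(T_n(X)\text{ is well-founded})$ --- when this $\Sigma^1_2$ statement holds, a bad array can be made $\leq_Q$-smaller at the $n$-th gadget by routing through a witness~$X$, so minimality compels~$G$ to do so, and otherwise minimality compels the opposite --- whence reading this off~$G$ yields the comprehension set. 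The crux is to set up the gadgets so that~$Q$ is simultaneously well-founded, which should require~$\mathsf{ATR}_0$ to iterate arithmetical comprehension along the well-orders coding the well-founded parts of the relevant trees, and provably not~$\mathsf{bqo}$, while keeping minimality informative; here I would lean on the characterisation of~$\mathsf{bqo}$ via well-foundedness of the order on the hereditarily countable sets with urelements, so that a minimal bad array can be manipulated in terms of ranks, and on existing machinery for reversals at the $\Pi^1_2\text{-}\mathsf{CA}$ level. Combining the three implications then yields the claimed equivalences over~$\mathsf{ATR}_0$.
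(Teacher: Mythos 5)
Your overall architecture --- closing the cycle $\Pi^1_2\text{-}\mathsf{CA}\Rightarrow\mathsf{MBA}\Rightarrow\mathsf{MBA}^-\Rightarrow\Pi^1_2\text{-}\mathsf{CA}$ --- is the right shape, and your middle link is correct and is essentially the only original content of the paper's own proof: the paper simply cites Theorem~1.3 of~\cite{bad-array-Pi^1_2-CA} for the $\mathsf{MBA}$ equivalence and observes that the reversal proved there (Corollary~2.3 of that reference) already goes through from $\mathsf{MBA}^-$, since a well-founded partial order is a partial ranking of itself and failure of $\mathsf{bqo}$ supplies the starting bad array. So the statement is, in this paper, an import plus a one-line remark, not a theorem proved from scratch.

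The genuine gap is that your two substantive implications are research plans rather than proofs, and in both cases the unresolved point you yourself flag is exactly where the difficulty lives. For $\Pi^1_2\text{-}\mathsf{CA}\Rightarrow\mathsf{MBA}$, you correctly observe that the Nash--Williams/Laver construction is an $\omega$-length recursion each step of which answers a $\Sigma^1_2$ question, and that it is not obvious that $\Pi^1_2\text{-}\mathsf{CA}_0$ (as opposed to a transfinite-recursion principle at that level) can run such a recursion; you leave this unchecked, and it cannot be waved through --- resolving it is a substantial part of~\cite{bad-array-Pi^1_2-CA}. For $\mathsf{MBA}^-\Rightarrow\Pi^1_2\text{-}\mathsf{CA}$, you propose a coding of $\Sigma^1_2$ truth values into Rado-style gadgets whose minimal bad arrays are ``forced'' to reveal those values, but no gadget is constructed, and the three competing requirements you list (well-foundedness of~$Q$ provably in $\mathsf{ATR}_0$, failure of $\mathsf{bqo}$, and minimality being informative) are precisely the ones that make such reversals hard; nothing in the sketch shows they can be met simultaneously. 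As it stands, neither direction is verifiable, so the proposal establishes only $\mathsf{MBA}\Rightarrow\mathsf{MBA}^-$. If your intent was to reprove the cited theorem rather than cite it, you would need to supply both missing arguments in full; if citation is acceptable, the efficient route is the paper's: quote the equivalence for $\mathsf{MBA}$ and check that the published reversal uses only $\mathsf{MBA}^-$.
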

\begin{proof}
For $\mathsf{MBA}$, the equivalence holds by Theorem~1.3 of~\cite{bad-array-Pi^1_2-CA}. To obtain the~remaining equivalence, it suffices to note that the proof of Corollary~2.3 in~\cite{bad-array-Pi^1_2-CA} does only use $\mathsf{MBA}^-$ rather than $\mathsf{MBA}$.
\end{proof}

In the following, we show that the base theory $\mathsf{ATR}_0$ is necessary in the sense that $\mathsf{MBA}^-$ is weak over~$\mathsf{ACA}_0$. We begin with some preparations. Given a~partial order~$Q$ and some~$n\in\mathbb N$, let $[Q]^{\leq n}$ be the set of non-empty subsets of~$Q$ that have at most $n$ elements. For $a,b\in[Q]^{\leq n}$ we stipulate
\begin{equation*}
a\prec b\quad\Leftrightarrow\quad\text{each $p\in a$ admits a $q\in b$ with $p<_Q q$},
\end{equation*}
which defines a (strict) partial order~$\prec$ on~$[Q]^{\leq n}$.

\begin{lemma}
For each $n\in\mathbb N$, the theory $\mathsf{RCA}_0$ proves that the relation $\prec$ on~$[Q]^{\leq n}$ is well-founded whenever $Q$ is a well-founded partial order.
\end{lemma}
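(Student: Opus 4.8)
The plan is to argue by induction on~$n$, reading well-foundedness in the standard form ``no infinite descending sequence'' and producing a separate $\mathsf{RCA}_0$-proof for each fixed~$n$. The base case $n=1$ is immediate, since $[Q]^{\le 1}$ consists of the singletons, $\{p\}\prec\{q\}$ holds precisely when $p<_Q q$, and hence a $\prec$-descending sequence in $[Q]^{\le 1}$ is literally a $<_Q$-descending sequence in~$Q$ (and $[Q]^{\le 0}$ is empty). For the step I would assume the claim for~$n$ and suppose for contradiction that $(a_k)_{k\in\mathbb N}$ is $\prec$-descending in $[Q]^{\le n+1}$. For $k\ge 1$ and $p\in a_k$ let $\phi_k(p)$ be the $<_{\mathbb N}$-least $q\in a_{k-1}$ with $p<_Q q$; this exists because $a_k\prec a_{k-1}$, and $(\phi_k)_{k\ge 1}$ is $\Delta^0_1$ in~$(a_k)_{k\in\mathbb N}$. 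Writing $\phi_{l,j}=\phi_{j+1}\circ\dots\circ\phi_l:a_l\to a_j$ (with $\phi_{j,j}$ the identity), we get $p\le_Q\phi_{l,j}(p)$, strictly when $l>j$. The argument then splits according to whether the $\Delta^0_1$ set $X=\{k\ge 1:\phi_k\text{ is not injective}\}$ is infinite or finite.

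If $X$ is infinite, I would enumerate it increasingly as $k_0<k_1<\dots$ and set $d_i=\operatorname{im}(\phi_{k_i})\subseteq a_{k_i-1}$. Since $\phi_{k_i}$ is not injective, $d_i$ is non-empty with $|d_i|\le|a_{k_i}|-1\le n$, so $d_i\in[Q]^{\le n}$. Moreover $d_{i+1}\prec d_i$: given $p\in d_{i+1}\subseteq a_{k_{i+1}-1}$, put $p'=\phi_{k_{i+1}-1,\,k_i}(p)\in a_{k_i}$ (legitimate since $k_{i+1}-1\ge k_i$), so that $p\le_Q p'<_Q\phi_{k_i}(p')\in d_i$. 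Thus $(d_i)_{i\in\mathbb N}$ is $\prec$-descending in $[Q]^{\le n}$, contradicting the induction hypothesis applied to the well-founded order~$Q$.

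If $X$ is finite, I would fix $K$ such that $\phi_k$ is injective for all $k>K$. Then injectivity forces $|a_l|\le|a_{l-1}|$ for $l>K$, so $(|a_l|)_{l\ge K}$ is non-increasing with values in $\{1,\dots,n+1\}$; hence it has at most~$n$ indices~$l$ with $|a_l|>|a_{l+1}|$, this set of drops is bounded, and there is $L\ge K$ with $|a_l|=|a_L|$ for all $l\ge L$. For $l>L$ the injection $\phi_l:a_l\to a_{l-1}$ is then a bijection between sets of the same finite size, so $\phi_l^{-1}$ is a well-defined and computable map. Picking any $q\in a_L$ and setting $p_L=q$, $p_l=\phi_l^{-1}(p_{l-1})$ for $l>L$ yields a sequence with $p_{l-1}=\phi_l(p_l)>_Q p_l$, i.\,e.\ an infinite $<_Q$-descending sequence in~$Q$, contradicting the well-foundedness of~$Q$.

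I expect the finite case to be the delicate point, because the naive way to extract an infinite descending sequence from the injections $\phi_k$ would invoke (bounded) K\"onig's lemma or an ordinal rank on~$Q$, and neither is available over~$\mathsf{RCA}_0$. What keeps the argument inside $\mathsf{RCA}_0$ is exactly that there are only boundedly many drops in the sizes $|a_l|$, so past a certain level the witness maps are genuine bijections and the sequence $(p_l)$ is produced by a primitive recursion; the remaining checks—that $(\phi_k)$, the predicate ``$\phi_k$ is not injective'', the enumeration of~$X$, and the sequences $(d_i)$ and $(p_l)$ are all $\Delta^0_1$—are routine. Finally, it is worth stressing that boundedness of~$n$ is essential: for unbounded finite subsets, well-foundedness of~$\prec$ amounts to the existence of a rank function on~$Q$, which $\mathsf{RCA}_0$ cannot provide.
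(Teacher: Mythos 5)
Your proof is correct and formalizes in $\mathsf{RCA}_0$ as claimed. It shares the paper's overall skeleton---external induction on~$n$ with a dichotomy whose two branches produce, respectively, a $\prec$-descending sequence in $[Q]^{\leq n}$ and a $<_Q$-descending sequence in~$Q$---but the combinatorial device driving the dichotomy is different. The paper takes, for each~$i$, a minimal $b_i\subseteq a_i$ with $b_i\succ a_{i+1}$ and splits on whether $b_i\neq a_i$ holds infinitely often: in the first branch the sets $b_i$ (having at most $n$ elements) themselves form the descending sequence in~$[Q]^{\leq n}$, and in the second the minimality of $b_i=a_i$ shows that every $q\in a_i$ has some $p\in a_{i+1}$ strictly below it, so a descending chain in~$Q$ is chosen by a forward recursion. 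You instead fix witness maps $\phi_k\colon a_k\to a_{k-1}$ and split on whether they are non-injective infinitely often, using the shrunken images $\operatorname{im}(\phi_{k_i})$ in the first branch and, in the second, the stabilization of the cardinalities $|a_l|$ to turn the eventually injective $\phi_l$ into bijections that can be inverted step by step. Both implementations are elementary and rest only on $\Delta^0_1$ constructions together with $\Sigma^0_1$-induction (for the increasing enumeration of~$X$, the eventual constancy of the bounded non-increasing sequence $|a_l|$, and the finite pigeonhole principle); the paper's minimal-subset trick is a little shorter, while your witness-map version makes the computability of every auxiliary object, and hence the $\mathsf{RCA}_0$-formalization, completely explicit. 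Your closing observation about why the argument avoids any appeal to bounded K\"onig's lemma is apt.
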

\begin{proof}
We argue by induction on~$n$ (external to~$\mathsf{RCA}_0$). The base case of $n=0$ is trivial. In the induction step, we derive a contradiction from the assumption that $a_0\succ a_1\succ\ldots$ is infinitely descending in~$[Q]^{\leq n+1}$. For each~$i\in\mathbb N$, let $b_i\subseteq a_i$ be minimal with $b_i\succ a_{i+1}$. First assume there is an infinite $I\subseteq\mathbb N$ with $b_i\neq a_i$ for all~$i\in I$. For $i<j$ in~$I$ we get $b_j\preceq a_j\preceq a_{i+1}\prec b_i$. So the $b_i$ form an infinitely descending sequence in~$[Q]^{\leq n}$, against the induction hypothesis. In the remaining case, we may assume that we have $b_i=a_i$ for all~$i\in\mathbb N$ (after passing to a tail of the original sequence). It follows that each $q\in a_i$ admits a $p\in a_{i+1}$ with $p<_Qq$. If this was false for~$q$, then $a_{i+1}\prec a_i$ would upgrade to $a_{i+1}\prec a_i\backslash\{q\}$, against the minimality of $b_i=a_i$. Now we can recursively pick $q_i\in a_i$ such that $q_0,q_1,\ldots$ is infinitely descending in~$Q$. But the latter was assumed to be well-founded.
\end{proof}

The following result will be central for our proof that $\mathsf{MBA}^-$ is weak over~$\mathsf{ACA}_0$. It involves an assumption that is false but consistent in view of Corollary~\ref{cor:1+2-bqo-strong}.

\begin{proposition}[$\mathsf{RCA}_0$]
    If $1\oplus 2$ is no $\mathsf{bqo}$, then $\mathsf{MBA}^-$ holds.
\end{proposition}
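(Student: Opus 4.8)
The plan is to construct, for an arbitrary well-founded partial order $Q$ that is not $\mathsf{bqo}$, a bad $Q$-array built from a $\prec$-minimal ``critical triple'' inside $Q$, and then to show that such an array is automatically $\leq_Q$-minimal. I will use repeatedly the following auxiliary fact: \emph{if $R$ is a well-founded partial order that is not $\mathsf{bqo}$, then there is an order reflecting map $1\oplus 2\to R$}. Indeed, were there no such map, Corollary~\ref{cor:quasi-emb-1oplus2} would make $R$ a linear sum of antichains of size at most two; the index order of the decomposition produced in the proofs of Proposition~\ref{prop:1oplus2-sum-of-antichains} and Corollary~\ref{cor:quasi-emb-1oplus2} is a suborder of $R$, hence well-founded, so $R$ would be a well-ordered sum of copies of $\overline 1$ and $\overline 2$, hence $\mathsf{bqo}$ by Proposition~\ref{prop:wo-sum-bqo} and the fact (recalled in the introduction, from~\cite{marcone-survey-old}) that $\mathsf{RCA}_0$ proves $\overline 2$ to be $\mathsf{bqo}$ --- a contradiction. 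The same argument applies when ``not $\mathsf{bqo}$'' is read as ``admitting a bad array on a block'', since the proof of Proposition~\ref{prop:wo-sum-bqo} reduces to the statement that $\overline 1$ and $\overline 2$ admit no bad array, which holds for block arrays for the same reasons as for barriers.

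Now fix a well-founded $Q$ that is not $\mathsf{bqo}$. Call a three-element subset $T\subseteq Q$ \emph{critical} if it is the image of some order reflecting map $1\oplus 2\to Q$; this is a decidable property of $T$ (by Corollary~\ref{cor:quasi-emb-1oplus2} it says exactly that $T$, as a suborder, is not a linear sum of antichains of size $\le 2$), so the set $\mathcal T$ of critical triples exists in $\mathsf{RCA}_0$ and is non-empty by the auxiliary fact. Since $\mathcal T\subseteq[Q]^{\le 3}$ and the relation $\prec$ on $[Q]^{\le 3}$ is well-founded by the preceding lemma, $\mathcal T$ has a $\prec$-minimal element $T_0=\{a,b,c\}$ --- here I use that over $\mathsf{RCA}_0$ any non-empty subset of the field of a well-founded relation has a minimal element, as otherwise one defines an infinite descending sequence by primitive recursion. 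Fix an order reflecting $h\colon 1\oplus 2\to Q$ with image $T_0$. Our standing assumption provides a bad $(1\oplus 2)$-array $g_0\colon B_0\to 1\oplus 2$; regarding it as a continuous array and composing with $h$ gives a continuous bad $Q$-array $G$, represented by the block function $h\circ g_0$ (composing a bad array with an order reflecting map yields a bad array).

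The essential point is that $G$ is $\leq_Q$-minimal. Suppose not: there is a bad $Q$-array $F<_Q G$, with base $V\subseteq\bigcup B_0$ and $F(X)<_Q G(X)$ for every $X\in[V]^\omega$. Since $G$ takes values in $T_0$, every value of $F$ is strictly below $a$, $b$, or $c$, hence lies in $Q':=\{q\in Q\mid q<_Q a\text{ or }q<_Q b\text{ or }q<_Q c\}$, a $\Delta^0_1$-definable, hence available, well-founded suborder of $Q$. Thus $F$ is a bad $Q'$-array, so $Q'$ is not $\mathsf{bqo}$, and the auxiliary fact applied to $Q'$ gives an order reflecting map $1\oplus 2\to Q'$ with some three-element image $T_1\subseteq Q'$. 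Then $T_1\in\mathcal T$, and since every element of $Q'$ is strictly below one of $a,b,c$ we have $T_1\prec T_0$, contradicting the $\prec$-minimality of $T_0$. Hence $G$ is a $\leq_Q$-minimal bad $Q$-array, and $\mathsf{MBA}^-$ follows.

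I expect the main difficulty to be identifying the right structure to minimize over: the family of bad arrays below $G$ is unwieldy, but replacing it by the set $\mathcal T$ of critical triples ordered by $\prec$ --- whose well-foundedness is exactly what the preceding lemma provides --- turns this into a finitary well-founded minimization available in $\mathsf{RCA}_0$, after which the key step is the observation that a strictly smaller bad array feeds back, through the auxiliary suborder $Q'$, into a $\prec$-smaller critical triple. A secondary technical point, which is why the auxiliary fact is phrased to cover both readings, is keeping the barrier-based definition of $\mathsf{bqo}$ and the block-based notion of array used in $\mathsf{MBA}^-$ in step.
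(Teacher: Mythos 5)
Your proposal is correct and follows essentially the same route as the paper: the same auxiliary observation (a well-founded non-$\mathsf{bqo}$ order admits an order reflecting copy of $1\oplus 2$, via Corollary~\ref{cor:quasi-emb-1oplus2} and Proposition~\ref{prop:wo-sum-bqo}), the same minimization over ``bad triples'' in $[Q]^{\leq 3}$ under $\prec$ using the preceding lemma, and the same feedback step through the suborder of elements strictly below the minimal triple. The only differences are expository (you spell out the composition $h\circ g_0$ and the block/barrier bookkeeping, which the paper leaves implicit).
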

\begin{proof}
   We consider a well-founded partial order~$Q$ that is not~$\mathsf{bqo}$. There must be an order reflecting map from $1\oplus 2$ into~$Q$. If not, Corollary~\ref{cor:quasi-emb-1oplus2} would allow us to write~$Q\cong\sum_{i\in I}A_i$ for a linear order~$I$ and antichains~$A_i$ of size at most two. Given that~$Q$ is well-founded, the same would hold for~$I$. But then~$Q$ would be~$\mathsf{bqo}$ by Proposition~\ref{prop:wo-sum-bqo} and the fact that $\overline 2$ is~$\mathsf{bqo}$.
   
   By a bad triple we shall mean a subset of~$Q$ that constitutes the range of some function $1\oplus 2\to Q$ that is order reflecting (and hence in particular injective). We view bad triples as elements of $[Q]^{\leq 3}$ with the order~$\prec$ from the previous lemma, which is well-founded since the same holds for~$Q$. Let us consider a bad triple~$b\subseteq Q$ that is $\prec$-minimal. By the assumption that $1\oplus 2$ is no better quasi order, we get a bad array $G:[W]^\omega\to Q$ with range~$b$. To establish~$\mathsf{MBA}^-$ by contradiction, we assume that $F<_Q G$ holds for some bad array~$F:[V]^\omega\to Q$. We may view $F$ as an array into the suborder
\begin{equation*}
Q_0=\{p\in Q\,|\,p<_Q q\text{ for some }q\in b\}.
\end{equation*}
As $F$ witnesses that $Q_0$ is no~$\mathsf{bqo}$, the latter must contain a bad triple~$a$. But we have $a\prec b$ by definition of~$Q_0$, against the minimality of~$b$.
\end{proof}

Let us now give the promised application. As the following proof reveals, the result remains valid when $\mathsf{ACA}_0$ is replaced by some other theory that does not prove~$1\oplus 2$ to be~$\mathsf{bqo}$, while arithmetic transfinite recursion could be replaced by the possibly weaker statement that~$1\oplus 2$ is~$\mathsf{bqo}$.

\begin{corollary}\label{cor:MBA-ATR}
    In the theory $\mathsf{ACA}_0$ one cannot prove that $\mathsf{MBA}^-$ entails the principle of arithmetic transfinite recursion~($\mathsf{ATR}_0$).
\end{corollary}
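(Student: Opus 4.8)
The plan is to combine the preceding proposition with a conservativity-style argument, so that the inconsistency of ``$\mathsf{ACA}_0 \vdash (\mathsf{MBA}^- \to \mathsf{ATR}_0)$'' is deduced from the consistency of ``$1\oplus 2$ is not $\mathsf{bqo}$'' relative to $\mathsf{ACA}_0$. First I would argue by contradiction: suppose $\mathsf{ACA}_0$ proves $\mathsf{MBA}^- \to \mathsf{ATR}_0$. By the Proposition just above, $\mathsf{RCA}_0$ (hence $\mathsf{ACA}_0$) proves that if $1\oplus 2$ is not $\mathsf{bqo}$ then $\mathsf{MBA}^-$ holds. Chaining these two implications inside $\mathsf{ACA}_0$, we get that $\mathsf{ACA}_0$ proves ``if $1\oplus 2$ is not $\mathsf{bqo}$, then $\mathsf{ATR}_0$''. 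Equivalently, $\mathsf{ACA}_0 + \neg(\text{$1\oplus 2$ is $\mathsf{bqo}$})$ proves $\mathsf{ATR}_0$, and a fortiori this theory proves that $\overline 3$ is $\mathsf{bqo}$ (since $\mathsf{ATR}_0$ suffices for that, by the results cited in the introduction—indeed $\mathsf{ATR}_0$ proves every countable order is $\mathsf{bqo}$ when restricted appropriately; in any case it certainly proves $\overline 3$ is $\mathsf{bqo}$).

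Next I would invoke Corollary~\ref{cor:one-two-three}, which is provable in $\mathsf{RCA}_0$: $1\oplus 2$ is $\mathsf{bqo}$ if and only if $\overline 3$ is $\mathsf{bqo}$. So from the previous paragraph, $\mathsf{ACA}_0 + \neg(\text{$1\oplus 2$ is $\mathsf{bqo}$})$ proves that $1\oplus 2$ \emph{is} $\mathsf{bqo}$, hence this theory is inconsistent. But then $\mathsf{ACA}_0$ proves ``$1\oplus 2$ is $\mathsf{bqo}$'' outright, contradicting Corollary~\ref{cor:1+2-bqo-strong} (which states that $\mathsf{ACA}_0$ cannot prove $1\oplus 2$ is $\mathsf{bqo}$). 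This contradiction completes the argument.

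The only delicate point—and the place I would be most careful—is the claim that $\mathsf{ATR}_0$ (or whatever consequence of $\mathsf{MBA}^-$ one extracts) suffices to prove that $\overline 3$ is $\mathsf{bqo}$, carried out \emph{inside} the ambient theory. One should either cite the relevant result that $\mathsf{ATR}_0$, or at least $\Pi^1_1\text{-}\mathsf{CA}_0$, proves $\overline 3$ (indeed all finite orders) to be $\mathsf{bqo}$—this is exactly the converse direction to Freund's theorem and is classical—or, more economically, observe that one does not even need $\mathsf{ATR}_0$: the statement ``$\mathsf{MBA}^-$'' applied to $Q = \overline 3$ (which is vacuously well-founded and, under our hypothesis, not $\mathsf{bqo}$) directly yields a $\leq_{\overline 3}$-minimal bad $\overline 3$-array, and one runs the standard Nash-Williams minimal-bad-array argument to derive a contradiction, showing $\overline 3$ \emph{is} $\mathsf{bqo}$ after all; alternatively one simply notes $\overline 3$ is $\mathsf{bqo}$ already follows since $\mathsf{ATR}_0 \vdash \overline 3\ \mathsf{bqo}$. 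Either route closes the loop; I would use the cleanest available citation for ``$\mathsf{ATR}_0$ proves finite orders are $\mathsf{bqo}$'' (or fall back on $\Pi^1_1\text{-}\mathsf{CA}_0$ via Montalbán's work) to keep the proof short. No genuinely new mathematics is needed here—the corollary is a pure assembly of the three preceding results plus a relative-consistency observation.
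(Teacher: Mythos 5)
Your primary argument is correct and is essentially the paper's proof: both pass to the consistent theory $\mathsf{ACA}_0+\neg(1\oplus 2\text{ is }\mathsf{bqo})$, use the preceding proposition to get $\mathsf{MBA}^-$ there, and derive a contradiction from the fact that $\mathsf{ATR}_0$ proves $1\oplus 2$ (equivalently, via Corollary~\ref{cor:one-two-three}, $\overline 3$) to be $\mathsf{bqo}$ --- the paper cites the clopen Ramsey theorem via~\cite{marcone-survey-old} for exactly this last point. One caveat on your proposed ``more economical'' alternative: applying $\mathsf{MBA}^-$ directly to $Q=\overline 3$ gives nothing, since $<_Q$ is empty on an antichain, so every bad $\overline 3$-array is vacuously minimal and no contradiction ensues; you should stick with the citation that $\mathsf{ATR}_0$ proves finite partial orders to be $\mathsf{bqo}$.
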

\begin{proof}
    Let~$\mathsf T$ be the extension of $\mathsf{ACA}_0$ by the statement that~$1\oplus 2$ is no~$\mathsf{bqo}$. This theory is consistent by Corollary~\ref{cor:1+2-bqo-strong}. The previous proposition tells us that $\mathsf T$ proves~$\mathsf{MBA}^-$. So if the present claim was false, then $\mathsf T$ would prove arithmetic transfinite recursion. But the latter entails that $1\oplus 2$ is~$\mathsf{bqo}$, by an application of the clopen Ramsey theorem (see~\cite{marcone-survey-old}). So~$\mathsf T$ would be inconsistent.
\end{proof}

We conclude this paper with the following counterpoint to the previous result. Let us note that we do not know whether either result remain valid when $\mathsf{MBA}^-$ is replaced by $\mathsf{MBA}$ and vice versa. Also, we do not know whether the following can be extended beyond arithmetic comprehension. Finally, we point out that the following proof makes no use of one particular feature of~$\mathsf{MBA}$, namely, that a minimal bad array can be found below a given bad array.

\begin{proposition}[$\mathsf{RCA}_0$]\label{prop:mba-to-aca}
Arithmetic comprehension ($\mathsf{ACA}_0$) follows from~$\mathsf{MBA}$.
\end{proposition}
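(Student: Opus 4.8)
The plan is to feed $\mathsf{MBA}$ a quasi order cooked up from an arbitrary injection and then recover the range of that injection from the resulting minimal bad array. Recall that over $\mathsf{RCA}_0$ arithmetic comprehension is equivalent to the statement that every injection $f\colon\mathbb N\to\mathbb N$ has a range $R=\{n\mid\exists m\,f(m)=n\}$; so I would fix such an $f$, write $R_s=\{f(0),\dots,f(s)\}$ for its stage-$s$ approximations, and aim to extract from $\mathsf{MBA}$ a set $F$ with $R\leq_{\mathrm T}F\oplus f$, which forces $R$ to exist by $\Delta^0_1$-comprehension. Note that this only uses that $\mathsf{MBA}$ produces \emph{some} minimal bad array, not one below a prescribed one.

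Working recursively in $f$, I would build $Q$ as a disjoint union of ``gadgets'' $Q_n$ ($n\in\mathbb N$) with no comparabilities between distinct gadgets except that each $Q_n$ carries a top element $a_n$, the $a_n$ forming an infinite antichain. The antichain already yields a bad $Q$-array $F_0\colon\{n\}\mapsto a_n$ on $[\mathbb N]^1$, recognized by $\mathsf{RCA}_0$ via $\Sigma^0_1$-completeness, while no bad array lives inside a single gadget. Inside $Q_n$ I would place an $f$-recursive poset whose antisymmetric quotient is finite — so that $Q$ is a well-founded quasi order and $\leq_Q$ is a partial ranking of itself — but whose $\leq_Q$-minimal elements each carry a label decoding $R\cap[0,n]$: for instance the order on $\{e_n^s\mid s\in\mathbb N\}$ given by $e_n^s\leq_Q e_n^t\iff R_s\cap[0,n]\supseteq R_t\cap[0,n]$, whose minimal elements are exactly the $e_n^s$ with $R_s\cap[0,n]=R\cap[0,n]$. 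Invoking $\mathsf{MBA}$ on $(Q,\leq_Q,F_0)$ gives a $\leq_Q$-minimal bad array $F\leq_Q F_0$, and $F\leq_Q F_0$ pins each value $F(X)$ into the gadget $Q_{\min X}$.

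The analytic core is the characterization of minimality: I would show that a bad array below $F_0$ is strictly lowerable (hence not minimal) precisely when all its values on some sub-block avoid the set $\operatorname{Min}(Q)$ of $\leq_Q$-minimal elements — lowering can be done pointwise and continuously once there is room, and badness is automatic since distinct gadgets are incomparable. Thus for the minimal $F$ the ``stabilized'' positions, those with $F(X)\in\operatorname{Min}(Q)$, are dense, so every sub-block — in particular the one whose base is $\{v\in\bigcup B_F\mid v\geq m\}$ — contains one, from whose value one reads off $R\cap[0,m']$ for some $m'\geq m$. The main obstacle I expect is turning this into an \emph{effective} computation of $R$ from $F\oplus f$: ``being stabilized'' is only co-c.e., and density only locates a stabilized position somewhere in each sub-block rather than at a predictable place, so the naive argument yields no more than $R\in\Sigma^0_1(F\oplus f)$, which is vacuous. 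The heart of the proof is therefore to refine the gadgets so that the values the minimal array may assume are pinned from \emph{both} sides — not merely by the approximation so far, but also by an over-approximation of $[0,n]\setminus R$ that minimality drives down to $\overline R\cap[0,n]$ — so that membership \emph{and} non-membership in $R$ can each be read off $F$ directly, all while keeping the presentation recursive in $f$ and the ranking well-founded; checking that such a refinement is possible is where the real work sits. Once $R\leq_{\mathrm T}F\oplus f$ is established, $R$ exists, and $\mathsf{ACA}_0$ follows.
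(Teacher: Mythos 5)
Your proposal takes a genuinely different route from the paper (computing the range of an injection from a minimal bad array, rather than the Girard--Hirst reduction to well-foundedness of $\omega^\alpha$), but it is not a complete proof: the step you yourself defer as ``where the real work sits'' is exactly where the argument breaks. Minimality of a bad array $F$ is, by definition, the non-existence of a bad $F'<_Q F$ on some sub-base $V'\subseteq V$. As you correctly work out, with your gadgets this only yields a \emph{density} statement: every sub-block contains a position where $F$ takes a $\leq_Q$-minimal value of $Q$. Density asserts that informative positions exist cofinally but gives no way to locate them computably in $F\oplus f$, since membership of a value in $\operatorname{Min}(Q)$ is only $\Pi^0_1(f)$; and because your approximations $R_s\cap[0,n]$ increase with $s$, every value of $F$ \emph{under}-approximates $R$, so taking unions over all block elements returns only a $\Sigma^0_1(f)$ description of $R$, which is vacuous. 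The proposed fix --- gadgets that also pin down an over-approximation of the complement --- is not constructed, and there is a structural obstacle to constructing it: minimality only forbids lowering on an \emph{entire} sub-block, so a minimal bad array may carry non-informative values on a set of positions that contains no $[V']^\omega$ and yet cannot be computably separated from the informative ones. To force every position to be informative you would need to be able to lower the array whenever any single value is non-informative, which would require the informative values themselves to be non-$\leq_Q$-minimal --- reopening the same problem or destroying the well-foundedness of your ranking. So the deferred refinement is not routine; it is the entire proof, and it is doubtful it can be carried out along these lines.

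For contrast, the paper's proof extracts no set from the minimal array at all. It uses the Girard--Hirst equivalence of $\mathsf{ACA}_0$ with ``$\omega^\alpha$ is well-founded for every well order $\alpha$'': a descending sequence in $\omega^\alpha$ yields a bad array $F_0(X)=\sigma^{\min X}$, the partial ranking is ``is a final segment of'', and one refutes the existence of a $\leq'$-minimal bad array outright --- the first entries of the values are weakly decreasing along $\vartriangleleft$-chains, so by well-foundedness of $\alpha$ they stabilize on a sub-block, after which deleting them produces a strictly $<'$-smaller bad array. The contradiction is purely combinatorial, which is precisely what sidesteps the effectivity problem your approach runs into. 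If you wish to keep an injection-based setup, you would need a mechanism of this kind, where minimality is contradicted directly rather than mined for information.
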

\begin{proof}
Given a linear order~$\alpha$, we write $\omega^\alpha$ for the set of finite sequences in~$\alpha$ that are weakly decreasing, ordered lexicographically. The principle that $\omega^\alpha$ is well-founded for any well order~$\alpha$ is equivalent to arithmetic comprehension over~$\mathsf{RCA_0}$, as shown by J.-Y.~Girard~\cite{girard87} and J.~Hirst~\cite{hirst94}.

Let us write $\sigma=\langle \sigma_0,\ldots,\sigma_{l(\sigma)-1}\rangle$ to refer to the length and entries of a finite sequence. For $\sigma,\tau\in \omega^\alpha$ we now stipulate
\begin{equation*}
\sigma\leq'\tau\quad\Leftrightarrow\quad \sigma=\langle\tau_i,\ldots,\tau_{l(\tau)-1}\rangle\text{ for some }i\leq l(\tau).
\end{equation*}
This yields a partial ranking of the order on~$\omega^\alpha$. Towards a contradiction, we assume that $\alpha$ is a well order while $\sigma^0,\sigma^1,\ldots\subseteq\omega^\alpha$ is strictly decreasing. We get a bad array $F_0:[\mathbb N]^\omega\to\omega^\alpha$ by stipulating that we have $F_0(X)=\sigma^x$ when $x$ is the minimal element of~$X$.

We now invoke $\mathsf{MBA}$ to obtain a $\leq'$-minimal bad array $G:[W]^\omega\to\omega^\alpha$. As explained above, the latter is represented by a function $g:B\to\omega^\alpha$ on a block, in the sense that we have $G(X)=g(t)$ for $B\ni t\sqsubset X$. Given that $G$ and hence~$g$ is bad, we have $g(s)\not\leq g(t)$ in~$\omega^\alpha$ for any $s,t\in B$ with $s\vartriangleleft t$. In particular, each sequence $g(s)$ is nonempty. Let $g(s)_\star=\langle g(s)_1,\ldots,g(s)_{l(g(s))-1}\rangle$ denote the sequence that results from $g(s)$ when the first entry $g(s)_0$ is removed. Given $s\vartriangleleft t$, we get $g(s)_0\geq g(t)_0$ in~$\alpha$, and for $g(s)_0=g(t)_0$ we get $g(s)_\star\not\leq g(t)_\star$ in~$\omega^\alpha$.

As in the proof of Proposition~\ref{prop:wo-sum-bqo} (except that we now work with blocks rather than barriers), each $s\in B$ gives rise to a new block $B/s=\{t\in B\,|\,s_{l(s)-1}<t_0\}$. For any $t\in B/s$ we find $r^i\in B$ with $s=r^0\vartriangleleft\ldots\vartriangleleft r^n=t$, so that the above yields an inequality $g(s)_0\geq g(t)_0$ in~$\alpha$. Since the latter is well-founded, we may thus fix an $r\in B$ such that $g(t)_0=g(r)_0$ is constant for~$t\in B/r$. The point is that we have found a perfect $\alpha$-array without using the clopen Ramsey theorem, which is far beyond the reach of~$\mathsf{RCA_0}$.

For the $r\in B$ that we have just fixed, we now consider the function $f:B/r\to\omega^\alpha$ with $f(t)=g(t)_\star$. Given $s,t\in B/r$, we have $g(s)_0=g(r)_0=g(t)_0$, so that $s\vartriangleleft t$ entails $f(s)=g(s)_\star\not\leq g(t)_\star=f(t)$ in~$\omega^\alpha$, as seen above. Now let $F:[V]^\omega\to\omega^\alpha$ with $V=\bigcup B/r$ be given by $F(X)=f(t)$ for~$B/r\ni t\sqsubset X$. Then~$F$ is a bad array. For an arbitrary $X\in[V]^\omega$, we pick $t\in B/r\subseteq B$ with $t\sqsubset X$ to get
\begin{equation*}
    F(X)=f(t)=g(t)_\star<'g(t)=G(X).
\end{equation*}
But then we have~$F<'G$, against the assumption that~$G$ is minimal.
\end{proof}

\nocite{gentzen69}
\bibliographystyle{amsplain}
\bibliography{Provable-bqos}

\end{document}